\newtheorem{theorem}{Theorem}[section]
\newtheorem{lemma}[theorem]{Lemma}
\newtheorem{prop}[theorem]{Proposition}
\theoremstyle{definition}
\newtheorem{definition}[theorem]{Definition}
\newtheorem{remarks}[theorem]{Remark}
\numberwithin{equation}{section}
\title[Alternating multiple poly-Bernoulli numbers]{Alternating variants of multiple poly-Bernoulli numbers and finite multiple zeta values in characteristic $0$ and $p$}
\author{Daichi Matsuzuki}
\email{m19044h@math.nagoya-u.ac.jp}
\address{Graduate School of Mathematics, Nagoya University, 
Furo-cho, Chikusa-ku, Nagoya, 464-8602, Japan}
\date{April 16, 2021.}
\begin{document}
\maketitle
\begin{abstract}
This paper consists of two parts: the characteristic $0$ part and the characteristic $p$ part.
In characteristic $0$ part, we introduce an alternating extension of multiple poly-Bernoulli numbers of K.~Imatomi, M.~Kaneko and E.~Takeda and obtain explicit presentations of the alternating finite multiple zeta values introduced by J. Zhao in term of the alternating extension of multiple poly-Bernoulli numbers.
In characteristic $p$ part, we introduce positive characteristic analogues of alternating finite multiple zeta values and express them as special values of finite Carlitz multiple polylogarithms defined by C.-Y.~Chang and Y.~Mishiba. We introduce alternating variants of R. Harada's multiple poly-Bernoulli-Carlitz numbers, which are analogues of multiple poly-Bernoulli numbers, to obtain explicit presentations of the finite alternating multiple zeta values.
We show that any finite multiple zeta value with integer index is expressed as $k$-linear combination of FMZV's with all-positive indices.
\end{abstract}
\tableofcontents
\section{Introduction}

In this paper, we generalize results of K.~Imatomi, M.~Kaneko and E.~Takeda on multiple poly-Bernoulli numbers in characteristic $0$ and their characteristic $p$ analogues ($p$: prime) established by R.~Harada to alternating setting.

Multiple poly-Bernoulli numbers (cf. \cite{Imatomi2014}) are generalization of poly-Bernoulli numbers, which are generalization of Bernoulli numbers (\cite{Kaneko1997}). They obtained connections of multiple poly-Bernoulli numbers with Stirling numbers and finite multiple zeta values.
We generalize these results of to alternating setting (Theorems \ref{alBernoulliandStirling} and \ref{0result}).

In characteristic $p$ case, L.~Carlitz introduced analogues of Bernoulli numbers called Bernoulli-Carlitz numbers. Harada generalized the notion to multiple poly-Bernoulli-Carlitz numbers and established their relationship with analogues of Stirling numbers and of finite multiple zeta values, respectively introduced in \cite{Kaneko2016} and \cite{Chang2017}.
We further generalize his results to alternating setting; alternating variants of multiple poly-Bernoulli-Carlitz numbers are introduced in Definition \ref{DefMPBCN} and we obtain explicit presentations of the alternating finite multiple zeta values in terms of them (Theorems \ref{thmMPBCNandStirling} and \ref{FAMZVandMPBCN}).

In appendix \ref{Finite multiple zeta values with non-all-positive indices}, we show that FMZV with integer index is expressed as $k$-linear combination of FMZV's with all-positive indices.

\section{Characteristic $0$}\label{Characteristic $0$}
In this section, we discuss the characteristic $0$ part. In \S \ref{Review on the result in original (non-alternating) case}, we review the results of Imatomi, Kaneko and Takeda: the connections of multiple poly-Bernoulli numbers with Stirling numbers and finite multiple zeta values. In \S \ref{0alternatingsection}, we consider alternating extension of their results. We introduce alternating multiple poly-Bernoulli numbers (Definition \ref{altMPBN}) and obtain their relationships with Stirling numbers and alternating finite multiple zeta values (Theorems \ref{alBernoulliandStirling} and \ref{0result}).

\subsection{Review on the results in original (non-alternating) case}\label{Review on the result in original (non-alternating) case}


For $\bold{s} \in \mathbb{Z}^r$ ($r\in \mathbb{N}$ is arrowed to be $0$), \textit{the multiple polylogarithm} (\textit{MPL} for short) $\mathrm{Li}_{\bold{s}}(z_1,\,\dots,\,z_r)$ is the multivariable series defined by

\begin{align}
\mathrm{Li}_{\bold{s}}(z_1,\,\dots,\,z_r)=\sum_{n_1>\cdots>n_r\geq1}\frac{z_1^{n_1}\cdots z_r^{n_r}}{n_1^{s_1}\cdots n_r^{s_r}},
\end{align}
(cf. \cite[Definition 2.3.1]{ZhaoBook}). We define $\mathrm{Li}_{\bold{s}}(z):=1$ if $r=0$ by convention.

For  $\bold{s} \in \mathbb{Z}^r$ and $n \in \mathbb{N}$, Imatomi, Kaneko and Takeda (\cite{Imatomi2014}) defined rationals $B_n^\bold{s}$ and $C_n^\bold{s}$ called the \textit{multiple poly-Bernoulli numbers} (\textit{MPBNs} for short) by
\begin{align*}
\sum_{n\geq0}B_n^{\bold{s}}\frac{x^n}{n!}&=\frac{\mathrm{Li}_{\bold{s}}(1-e^{-x},\,1,\,\dots,\,1)}{1-e^{-x}},\quad 
\sum_{n\geq0}C_n^\bold{s}\frac{x^n}{n!}&=e^{-x}\frac{\mathrm{Li}_{\bold{s}}(1-e^{-x},\,1,\,\dots,\,1)}{1-e^{-x}}.
\end{align*}
If $r=1$, then $B_n^\bold{s}$ and $C_n^\bold{s}$ coincide with poly-Bernoulli numbers defined in(\cite{Kaneko1997, Arakawa1999}; if $s_1=1$ in addition, these are equal to Bernoulli numbers.

\vspace{0.1in}
Imatomi, Kaneko and Takeda proved the following equations:

\begin{prop}[{\cite[Proposition 5]{Imatomi2014}}]\label{0dual}

We have the equalities 
\begin{align*}
&\sum_{s_1,\,\dots,\,s_r\geq 1}\sum_{n\geq0}B_n^{(-s_1,\,-s_2,\,\dots,\,-s_r)} \frac{x^n}{n!}\frac{y_1^{s_1}}{s_1!}\cdots \frac{y_r^{s_r}}{s_r!} \\
=&\frac{(1-e^{-x})^{r-1}}{( e^{-y_1}+e^{-x}-1)(e^{-y_1-y_2}+e^{-x}-1)\cdots( e^{-y_1-\cdots-y_r}+e^{-x}-1)} 
\end{align*}
and
\begin{align*}
&\sum_{s_1,\,\dots,\,s_r\geq 1}\sum_{n\geq0} C_n^{(-s_1,\,-s_2,\,\dots,\,-s_r)} \frac{x^n}{n!}\frac{y_1^{s_1}}{s_1!}\cdots \frac{y_r^{s_r}}{s_r!} \\
=&\frac{e^{-x}(1-e^{-x})^{r-1}}{(e^{-y_1}+e^{-x}-1)(e^{-y_1-y_2}+e^{-x}-1)\cdots( e^{-y_1-\cdots-y_r}+e^{-x}-1)} .
\end{align*}

\end{prop}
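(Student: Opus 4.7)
The plan is to unpack the left-hand side directly from the defining generating function of $B_n^{\mathbf{s}}$ and the series definition of the multiple polylogarithm, interchange the order of summation to obtain a closed iterated series over the simplex $\{n_1>\cdots>n_r\geq 1\}$, and then evaluate the latter in closed form and match with the right-hand side by elementary algebra. Writing $z=1-e^{-x}$ and using
$$\mathrm{Li}_{(-s_1,\ldots,-s_r)}(z,1,\ldots,1)=\sum_{n_1>\cdots>n_r\geq 1}n_1^{s_1}\cdots n_r^{s_r}\,z^{n_1}$$
together with $\sum_{s\geq 1}(n y)^{s}/s!=e^{n y}-1$, the left-hand side reduces to
$$\frac{1}{z}\sum_{n_1>\cdots>n_r\geq 1}z^{n_1}\prod_{i=1}^{r}\bigl(e^{n_i y_i}-1\bigr),$$
the swap of summations being legal coefficient-by-coefficient in $x$ because $z$ has no constant term.

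The heart of the argument is the master identity
$$\sum_{n_1>\cdots>n_r\geq 1}z^{n_1}\prod_{i=1}^{r}t_i^{n_i}=\prod_{j=1}^{r}\frac{z\,t_1 t_2\cdots t_j}{1-z\,t_1 t_2\cdots t_j},$$
which I would prove by the substitution $N_r=n_r$, $N_j=n_j-n_{j+1}$ for $j<r$, with each $N_j\geq 1$: then $n_i=\sum_{j\geq i}N_j$, so $\prod_i t_i^{n_i}=\prod_j(t_1\cdots t_j)^{N_j}$ and $z^{n_1}=\prod_j z^{N_j}$, the sum decouples over $j$, and each factor is a geometric series. Expanding $\prod_i(e^{n_i y_i}-1)$ by inclusion-exclusion over subsets $S\subseteq\{1,\ldots,r\}$ (taking $t_i=e^{y_i}$ for $i\in S$ and $t_i=1$ otherwise) and summing each piece by the master identity produces an explicit rational expression in $z$ and the $e^{y_i}$. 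To match the claimed right-hand side I would then apply the algebraic identity
$$1-z\,e^{y_1+\cdots+y_j}=e^{y_1+\cdots+y_j}\bigl(e^{-y_1-\cdots-y_j}+e^{-x}-1\bigr),$$
which turns each denominator factor into a corresponding factor on the right-hand side, with accumulated exponentials $\prod_j e^{y_1+\cdots+y_j}$ that telescope against the numerator and leave exactly the residual power $z^{r-1}=(1-e^{-x})^{r-1}$. The identity for $C_n^{(-s_1,\ldots,-s_r)}$ then follows at once by multiplying by $e^{-x}$, since the generating function of $C_n^{\mathbf{s}}$ is $e^{-x}$ times that of $B_n^{\mathbf{s}}$.

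I expect the most delicate step to be the inclusion-exclusion collapse: verifying that the $2^r$ rational functions contributed by subsets $S\subseteq\{1,\ldots,r\}$ really combine into a single product of the claimed form requires careful bookkeeping with the cumulative products $t_1\cdots t_j$ along each subset, and it is here that the precise structure of the right-hand side emerges. The master identity itself, the change of variables, and the final substitution $z=1-e^{-x}$ are by comparison routine.
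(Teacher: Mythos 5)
Your overall route --- unpacking the definition of $B_n^{\mathbf{s}}$, interchanging the sums, the telescoping change of variables $N_j=n_j-n_{j+1}$ that yields your ``master identity'', and the final rewriting $1-z\,e^{y_1+\cdots+y_j}=e^{y_1+\cdots+y_j}\bigl(e^{-y_1-\cdots-y_j}+e^{-x}-1\bigr)$ --- is exactly the argument the paper gives for its alternating generalization of this proposition (specialize all $\epsilon_i=1$ there). The one place you diverge is precisely the step you flag as delicate, and it is a genuine gap: the inclusion--exclusion collapse cannot be carried out. Taking the printed summation range $s_i\ge 1$ at face value forces $\sum_{s\ge1}(ny)^{s}/s!=e^{ny}-1$ and hence the expansion over subsets $S\subseteq\{1,\dots,r\}$; but those $2^{r}$ rational functions do \emph{not} recombine into the displayed product. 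Already for $r=1$ your own reduction gives
\[
\frac{1}{z}\sum_{n\ge1}z^{n}\bigl(e^{ny_1}-1\bigr)=\frac{e^{y_1}}{1-ze^{y_1}}-\frac{1}{1-z}=\frac{1}{e^{-y_1}+e^{-x}-1}-e^{x},
\]
which differs from the claimed right-hand side by $e^{x}$. So no amount of bookkeeping will make the collapse come out; the identity as literally printed is false.

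The resolution is that the lower bound in the statement is a misprint: it should read $s_1,\dots,s_r\ge 0$, as in the paper's own alternating version of this proposition (whose proof sums over $s_i\ge 0$) and as the printed right-hand side requires. With that range one has $\sum_{s\ge0}(ny)^{s}/s!=e^{ny}$, the inclusion--exclusion disappears entirely, each factor of your master identity with $t_i=e^{y_i}$ becomes $z/(e^{-y_1-\cdots-y_j}+e^{-x}-1)$ after your algebraic identity, and the $r$ factors of $z$ against the prefactor $1/z$ leave exactly $(1-e^{-x})^{r-1}$. Everything else in your write-up --- the master identity and its proof by the substitution $N_j=n_j-n_{j+1}$, the justification of the interchange of summation, and the deduction of the $C_n$ identity by multiplying by $e^{-x}$ --- is correct and coincides with the paper's argument.
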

\subsubsection{Connection with the Stirling numbers}
For $m,\,n \in \mathbb{N}$, the \textit{Stirling numbers} 
$\left[
    \begin{array}{c}
     n\\
     m
    \end{array}
  \right]
$
,
$\left\{
    \begin{array}{c}
     n\\
     m
    \end{array}
  \right\}\in \mathbb{Z}
$
of the \textit{first} and the \textit{second kind} are defined by the formulae

\begin{align*}
x(x+1)\cdots(x+n-1)=\sum_{m\geq0}^n \left[
    \begin{array}{c}
     n\\
     m
    \end{array}
  \right]x^m, \label{defStirling0}\\
(e^x-1)^m=m!\sum_{n\geq m}\left\{
\begin{array}{c}
     n\\
     m
    \end{array}
  \right\}
  \frac{x^n}{n!}.
  \end{align*}
(see \cite[\S 6.1 and \S 7.4 (7.49)]{GrahamBook}). In this paper, we use the formula
\begin{equation}
e^x(e^x-1)^{m-1}=(m-1)!\sum_{n\geq m-1}\left\{
\begin{array}{c}
     n+1\\
     m
    \end{array}
  \right\}
  \frac{x^n}{n!}, \label{diffdef}
  \end{equation}
which is obtained by the differentiation, and the duality 
\begin{equation}
\left[
    \begin{array}{c}
     n\\
     m
    \end{array}
  \right]
  \equiv
  \left\{
\begin{array}{c}
     l-m\\
     l-n
    \end{array}
  \right\} \mod l
   \label{Stirlingduality}
\end{equation}
between two kinds of Stirling numbers, which holds for prime $l$ and $1\leq m<n<l$ (see \cite[\S 5]{Hoffman2015}).
Using these integers, we can  write multiple poly-Bernoulli numbers down as finite sums:

\begin{theorem}[{\cite[Theorem 3]{Imatomi2014}}] \label{Imatomithem}
For $\bold{s} =(s_1,\cdots,s_r) \in \mathbb{Z}^r$, we have for $n \in \mathbb{N}$ the equalities
\begin{align}
B_n^{\bold{s}}=(-1)^n \sum_{n+1\geq m_1>\cdots>m_r>0} \frac{(-1)^{m_1-1}(m_1-1)!\left\{
    \begin{array}{c}
     n\\
     m_1-1
    \end{array}
  \right\}}
{m_1^{s_1}\cdots m_r^{s_r}},\\
C_n^{\bold{s}}=(-1)^n \sum_{n+1\geq m_1>\cdots>m_r>0} \frac{(-1)^{m_1-1}(m_1-1)!\left\{
    \begin{array}{c}
     n+1\\
     m_1
    \end{array}
  \right\}}
{m_1^{s_1}\cdots m_r^{s_r}}.
\end{align}
\end{theorem}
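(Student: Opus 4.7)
The plan is to expand both sides of the defining generating functions for $B_n^{\bold{s}}$ and $C_n^{\bold{s}}$ as power series in $x$ and match coefficients of $x^n/n!$. Using the series definition of the MPL directly gives
\begin{align*}
\frac{\mathrm{Li}_{\bold{s}}(1-e^{-x},1,\ldots,1)}{1-e^{-x}}
=\sum_{m_1>\cdots>m_r\geq 1}\frac{(1-e^{-x})^{m_1-1}}{m_1^{s_1}\cdots m_r^{s_r}},
\end{align*}
so the task reduces to Taylor-expanding $(1-e^{-x})^{m_1-1}$ in $x$.

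For this, I would substitute $-x$ for $x$ in the definition of the Stirling numbers of the second kind and multiply by $(-1)^m$ to obtain
\begin{align*}
(1-e^{-x})^m = m!\sum_{n\geq m}(-1)^{n+m}\left\{\begin{array}{c}n\\ m\end{array}\right\}\frac{x^n}{n!}.
\end{align*}
Plugging this with $m=m_1-1$ into the previous display, interchanging summations, and reading off the coefficient of $x^n/n!$ yields the formula for $B_n^{\bold{s}}$. The upper bound $m_1\leq n+1$ appears automatically because $\left\{\begin{array}{c}n\\ m_1-1\end{array}\right\}$ vanishes whenever $m_1-1>n$.

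The formula for $C_n^{\bold{s}}$ is obtained in the same way: the extra factor $e^{-x}$ in its generating function produces $e^{-x}(1-e^{-x})^{m_1-1}$ in place of $(1-e^{-x})^{m_1-1}$. Applying the same substitution $x\mapsto -x$ and sign manipulation to the differentiated formula \eqref{diffdef} gives
\begin{align*}
e^{-x}(1-e^{-x})^{m-1}=(-1)^{m-1}(m-1)!\sum_{n\geq m-1}(-1)^n\left\{\begin{array}{c}n+1\\ m\end{array}\right\}\frac{x^n}{n!},
\end{align*}
after which the same coefficient matching produces the claimed identity.

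The only real obstacle here is careful bookkeeping of the signs arising from the substitution $x\mapsto -x$ and from pulling a factor of $(-1)^{m-1}$ out of $(1-e^{-x})^{m-1}=(-1)^{m-1}(e^{-x}-1)^{m-1}$; once these signs are tracked, the result follows by a direct series manipulation with no further input required.
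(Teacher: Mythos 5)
Your proof is correct, and it is essentially the argument the paper uses: the paper does not reprove this cited result of Imatomi--Kaneko--Takeda, but its proof of the alternating generalization (Theorem \ref{alBernoulliandStirling}) proceeds exactly as you do, expanding the multiple polylogarithm, writing $(1-e^{-x})^{m_1-1}=(-1)^{m_1-1}(e^{-x}-1)^{m_1-1}$, invoking the Stirling generating function and its differentiated form \eqref{diffdef} with $x\mapsto -x$, and matching coefficients of $x^n/n!$, with the bound $m_1\leq n+1$ coming for free from the vanishing of the Stirling numbers. Your sign bookkeeping checks out in both cases.
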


\subsubsection{Connection with finite multiple zeta values}

The ring $\prod_l(\mathbb{Z}/l\mathbb{Z})/\bigoplus_l (\mathbb{Z}/l\mathbb{Z})$, where the symbol $l$ runs through the set of all prime numbers is denoted by $\mathcal{A}$. 
It should be noticed that the field $\mathbb{Q}$ can be canonically embedded into the ring $\mathcal{A}$, that is, the ring $\mathcal{A}$ is a $\mathbb{Q}$-algebra.

\begin{definition}[{\cite[\S 7]{Kaneko2019}}]\label{0FMZVs}
For $\bold{s}\in \mathbb{Z}^r$, the element $\zeta_{\mathcal{A}}(\bold{s})$$=($$\zeta_{\mathcal{A}}(\bold{s})_l)_{\text{$l$:prime}}$ of $\mathcal{A}$ called the \textit{finite multiple zeta value} (\textit{FMZV} for short) is defined to be the image under the surjection $\prod_l(\mathbb{Z}/l\mathbb{Z}) \rightarrow \mathcal{A}$ of the elements of $\prod_l(\mathbb{Z}/l\mathbb{Z})$ whose component in the direct factor $\mathbb{Z}/l\mathbb{Z}$ is
\begin{equation*}
\zeta_{\mathcal{A}}(\bold{s})_l :=\sum_{l>n_1>\cdots>n_r>0}\frac{1}{n_1^{s_1}\,\cdots n_r^{s_r}} \in \mathbb{Z}/l\mathbb{Z}.
\end{equation*}
\end{definition}

We call the natural number $r$ the \textit{depth} of the FMZV $\zeta_{\mathcal{A}}(\bold{s})$. The product of any two FMZVs is written by a $\mathbb{Q}$-linear combination of FMZVs (\cite[Section 7]{Kaneko2019}). FMZVs are realized as special values of the finite version of multiple polylogarithm $\mathfrak{L}_{\mathcal{A},\,\bold{s}}(z)$ introduced by K.~Sakugawa, S.-I.~Seki (\cite[Definition 3.8]{Sakugawa2017}), whose value $\mathfrak{L}_{\mathcal{A},\,\bold{s}}(\bold{a}_1,\,\dots,\,\bold{a}_r)$=$(\mathfrak{L}_{\mathcal{A},\,\bold{s}}(\bold{a}_1,\,\dots,\,\bold{a}_r))_l$ (each $\bold{a}_i=(a_{i,\,l})_{l:\text{prime}}$ is an element of $\mathcal{A}$) is given by
\begin{equation}
(\mathfrak{L}_{\mathcal{A},\,\bold{s}}(\bold{a}_1,\,\dots,\,\bold{a}_r))_l=\sum_{l>n_1>\cdots>n_r>0}\frac{a_{1,\,l}^{n_1}\cdots a_{r,\,l}^{n_r}}{n_1^{s_1}\,\cdots n_r^{s_r}} \in \mathbb{Z}/l\mathbb{Z} \label{FMPL}
\end{equation}
for each prime $l$, in precise we have the equality

\begin{equation}
\zeta_\mathcal{A}(\bold{s})=\mathfrak{L}_{\mathcal{A},\,\bold{s}}(1,\,\dots,\,1) \label{Seki}
\end{equation}
for each $\bold{s} \in \mathbb{Z}^r$.

Imatomi, Kaneko and Takeda obtained the following equalities:

\begin{theorem}[{\cite[Theorem 8]{Imatomi2014}}]\label{thmImatomi}
\begin{enumerate}
\item
For a prime $l$ and $\bold{s} \in \mathbb{Z}^r$, we have the congruence
\begin{equation*}
\zeta_{\mathcal{A}}(\bold{s})_l\equiv -C_{l-2}^{s_1 -1,\,s_2,\,\dots,\,s_r} \mod l. \label{eqImatomi1} 
\end{equation*}
\item
If we take $r^\prime \in \mathbb{N}$ and put $\bar{\bold{s}}:=(1,\,\dots,1,$ $s_1,\,\dots,\,s_r )\in \mathbb{N}_{>0}^{r+r^\prime}$, then we have the congruence 
\begin{equation}
\zeta_{\mathcal{A}}(\bar{\bold{s}})_l\equiv -C_{l-r^\prime-2}^{s_1 -1,\,s_2,\,\dots,\,s_r} \mod l. \label{eqImatomi2} 
\end{equation}
\end{enumerate}
\end{theorem}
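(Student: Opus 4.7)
The plan is to derive both congruences directly from Theorem \ref{Imatomithem}, together with the Stirling duality (\ref{Stirlingduality}), Wilson's theorem, and the reflection $1/(l-k)\equiv -1/k\pmod{l}$. Since part (1) is the special case $r'=0$ of part (2), I would prove (2) and recover (1) at once. Throughout $l$ will be taken to be an odd prime, since $l=2$ affects only a single component of $\mathcal{A}$ and may be ignored.

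Applying Theorem \ref{Imatomithem} to the index $(s_1-1,s_2,\dots,s_r)$ at $n=l-r'-2$, the Stirling duality (\ref{Stirlingduality}) (with $n=r'+1$, $m=m_1$) yields
\[
\left\{\begin{array}{c} l-r'-1 \\ m_1 \end{array}\right\} \equiv \left[\begin{array}{c} l-m_1 \\ r'+1 \end{array}\right] \pmod{l}
\]
for $1\le m_1\le l-r'-2$, with both sides equal to $1$ in the boundary case $m_1=l-r'-1$. Expanding this Stirling number of the first kind via the classical identity
\[
\left[\begin{array}{c} N \\ k+1 \end{array}\right] = (N-1)!\sum_{N-1\ge j_1>\cdots>j_k>0} \frac{1}{j_1\cdots j_k}
\]
(obtained by reading off the coefficient of $x^{k+1}$ in $x(x+1)\cdots(x+N-1)$) rewrites $C_{l-r'-2}^{(s_1-1,\dots,s_r)}$ as a double sum indexed by $m_1>\cdots>m_r$ and $j_1>\cdots>j_{r'}$.

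Wilson's theorem in the form $(m_1-1)!(l-m_1-1)!\equiv (-1)^{m_1+1}/m_1\pmod{l}$ cancels the sign $(-1)^{m_1-1}$ supplied by Theorem \ref{Imatomithem} and converts $m_1^{s_1-1}$ into $m_1^{s_1}$. I then reflect in the $j$-variables: substituting $j_i=l-k_i$ gives $1/j_i\equiv -1/k_i$, turns the bound $j_1\le l-m_1-1$ into $k_1>m_1$, and after reversing the order of the $k_i$'s converts the inner sum into
\[
(-1)^{r'}\sum_{l>p_1>\cdots>p_{r'}>m_1}\frac{1}{p_1\cdots p_{r'}}.
\]
The combined double sum is then exactly $\zeta_{\mathcal{A}}(\bar{\bold{s}})_l$, and the accumulated sign $(-1)^{l-r'-2}\cdot(-1)^{r'}=(-1)^{l-2}\equiv -1\pmod{l}$ produces the desired congruence.

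The main subtlety is bookkeeping: one must verify that after the reflection the combined condition $l>p_1>\cdots>p_{r'}>m_1>\cdots>m_r>0$ automatically subsumes the original bound $m_1\le l-r'-1$, so that no part of the summation range is lost; the boundary case of the Stirling duality also needs a brief separate check, as does the degenerate $r'=0$ case (where the inner $j$-sum is empty and the reflection step is vacuous). Beyond this, the argument reduces to routine manipulation of signs and factorials modulo $l$.
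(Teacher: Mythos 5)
Your argument is correct, and all the delicate points you flag (the boundary case $m_1=l-r'-1$ of the duality, the automatic recovery of the bound $m_1\le l-r'-1$ after reflection, the vacuous $r'=0$ case) do work out. For part (2) this is essentially the paper's proof of the alternating generalization (Theorem \ref{0result}) run in reverse: the paper starts from $\zeta_{\mathcal{A}}(\bar{\bold{s}})_l$, reflects the harmonic variables $i\mapsto l-i$, packages the resulting elementary symmetric sums into the generating polynomial $x(x+1)\cdots(x+l-m_1-1)$ to produce first-kind Stirling numbers, and then applies the duality \eqref{Stirlingduality} together with the Wilson-type congruence $1/(l-m_1-1)!\equiv(-1)^{m_1+1}m_1!$ to land on $-C_{l-r'-2}$; you start from $C_{l-r'-2}$ and deploy exactly the same ingredients in the opposite order, replacing the generating-function step by the equivalent coefficient identity for $\left[\begin{array}{c} N\\ k+1\end{array}\right]$. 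The genuine divergence is in part (1): the paper proves it separately and more directly, using the closed formula $m!\left\{\begin{array}{c} l-1\\ m\end{array}\right\}=\sum_{s}\binom{m}{s}(-1)^{m-s}s^{l-1}$ and Fermat's little theorem to get $(-1)^m m!\left\{\begin{array}{c} l-1\\ m\end{array}\right\}\equiv-1 \pmod l$ in one line, whereas you recover the same congruence as the $r'=0$ specialization of the duality-plus-Wilson computation. Your unification is more economical and makes clear that (1) is not logically separate from (2); the paper's route for (1) avoids the Stirling duality and its boundary-case check entirely. Both are sound.
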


\subsection{Alternating version}\label{0alternatingsection}


In this subsection, we consider alternating extensions of notions and results in the previous subsection. We first give an alternating extension to the MPBNs by the following series:

\begin{definition}\label{altMPBN}
For $\bold{s} \in \mathbb{Z}^r$ and $\boldsymbol{\epsilon} \in (\mathbb{Z}^\times)^r=\{ \pm1\}^r$, the sequences of rationals $B_n^\bold{s;\,\boldsymbol{\epsilon}}$ and $C_n^\bold{s;\,\boldsymbol{\epsilon}}$  are defined by the following series:

\begin{align}
\sum_{n\geq0}B_n^{\bold{s};\,\boldsymbol{\epsilon}}\frac{x^n}{n!}&=\frac{\mathrm{Li}_{\bold{s}}((1-e^{-x})\epsilon_1,\,\epsilon_2,\,\dots,\,\epsilon_r)}{1-e^{-x}},\label{DefAMPBernoulliNs}\\
\sum_{n\geq0}C_n^{\bold{s};\,\boldsymbol{\epsilon}}\frac{x^n}{n!}&=e^{-x}\frac{\mathrm{Li}_{\bold{s}}((1-e^{-x})\epsilon_1,\,\epsilon_2,\,\dots,\,\epsilon_r)}{1-e^{-x}}.\label{DefAMPCernoulliNs}
\end{align}
We call these rationals \textit{alternating multiple poly-Bernoulli numbers} (\textit{AMPBNs} for short).
\end{definition}

The following could be said as alternating extension of Proposition \ref{0dual}:
\begin{prop}
For $\boldsymbol{\epsilon} \in \{ \pm1\}^r$, the following equalities hold:
\begin{align*}
&\sum_{s_1,\,\dots,\,s_r\geq 0}\sum_{n\geq0}B_n^{(-s_1,\,-s_2,\,\dots,\,-s_r;\,\boldsymbol{\epsilon})} \frac{x^n}{n!}\frac{y_1^{s_1}}{s_1!}\cdots \frac{y_r^{s_r}}{s_r!} \\
=&\frac{(1-e^{-x})^{r-1}}{(\epsilon_1 e^{-y_1}+e^{-x}-1)(\epsilon_1 \epsilon_2 e^{-y_1-y_2}+e^{-x}-1)\cdots(\epsilon_1\,\cdots\epsilon_r e^{-y_1-\cdots-y_r}+e^{-x}-1)} \\
\intertext{and}
&\sum_{s_1,\,\dots,\,s_r\geq 0}\sum_{n\geq0} C_n^{(-s_1,\,-s_2,\,\dots,\,-s_r;\,\boldsymbol{\epsilon})} \frac{x^n}{n!}\frac{y_1^{s_1}}{s_1!}\cdots \frac{y_r^{s_r}}{s_r!} \\
=&\frac{e^{-x}(1-e^{-x})^{r-1}}{(\epsilon_1 e^{-y_1}+e^{-x}-1)(\epsilon_1 \epsilon_2 e^{-y_1-y_2}+e^{-x}-1)\cdots(\epsilon_1\,\cdots\epsilon_r e^{-y_1-\cdots-y_r}+e^{-x}-1)} .
\end{align*}
\end{prop}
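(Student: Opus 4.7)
The plan is to compute the left-hand side as a formal power series directly from Definition \ref{altMPBN}, following the template of the non-alternating case in Proposition \ref{0dual} while tracking the signs $\epsilon_i$. Setting $z_1 := (1-e^{-x})\epsilon_1$ and $z_i := \epsilon_i$ for $i \geq 2$, the left-hand side of the first identity equals
\[
\frac{1}{1-e^{-x}} \sum_{s_1,\dots,s_r \geq 0} \mathrm{Li}_{(-s_1,\dots,-s_r)}(z_1,\dots,z_r)\,\frac{y_1^{s_1}}{s_1!}\cdots \frac{y_r^{s_r}}{s_r!}.
\]
Expanding the MPL as $\sum_{n_1>\cdots>n_r\geq 1}n_1^{s_1}\cdots n_r^{s_r}z_1^{n_1}\cdots z_r^{n_r}$, swapping the order of summation, and using $\sum_{s\geq 0}(n_i y_i)^s/s! = e^{n_i y_i}$ collapses the $s_i$-sums and reduces the inner sum to
\[
\sum_{n_1>\cdots>n_r\geq 1}(z_1 e^{y_1})^{n_1}\cdots (z_r e^{y_r})^{n_r}.
\]

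Next, I would apply the change of variables $m_i := n_i - n_{i+1}$ for $1 \leq i < r$ and $m_r := n_r$, so that each $m_i \geq 1$. The monomial factors as $\prod_{i=1}^{r}(z_1 \cdots z_i\, e^{y_1+\cdots+y_i})^{m_i}$, and the sum splits into $r$ independent geometric series, yielding
\[
\prod_{i=1}^r \frac{z_1\cdots z_i\, e^{y_1+\cdots+y_i}}{1 - z_1\cdots z_i\, e^{y_1+\cdots+y_i}}.
\]
To bring this into the claimed form, I would substitute $z_1\cdots z_i = (1-e^{-x})\epsilon_1\cdots \epsilon_i$ and multiply the numerator and denominator of each factor by $\epsilon_1\cdots\epsilon_i\, e^{-y_1-\cdots-y_i}$; since $\epsilon_1\cdots\epsilon_i \in \{\pm 1\}$ is its own inverse, each factor simplifies to $(1-e^{-x})/(\epsilon_1\cdots\epsilon_i\, e^{-y_1-\cdots-y_i} + e^{-x} - 1)$. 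Taking the product over $i$ and dividing by the extra $1-e^{-x}$ from (\ref{DefAMPBernoulliNs}) then produces the $(1-e^{-x})^{r-1}$ in the numerator of the first identity. The second identity follows from the first by the extra factor of $e^{-x}$ in (\ref{DefAMPCernoulliNs}).

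The main step requiring care is this final sign manipulation, since the denominators in the statement use $\epsilon_1\cdots\epsilon_i$ (not $\epsilon_i$ alone) and sit outside the exponential; the $\pm 1$-invertibility of $\epsilon_1\cdots\epsilon_i$ is precisely what makes the reduction clean and reproduces the non-alternating formula when all $\epsilon_i=1$. Beyond this, the argument is a direct extension of the proof of Proposition \ref{0dual}, and I anticipate no conceptual obstacle.
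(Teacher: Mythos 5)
Your proposal is correct and follows essentially the same route as the paper's proof: expand the multiple polylogarithm, collapse the $s_i$-sums into exponentials, apply the difference change of variables to split the sum into independent geometric series, and clear each factor using $(\epsilon_1\cdots\epsilon_i)^2=1$. The sign manipulation you flag as the delicate step is handled exactly as in the paper, and the second identity indeed follows by the extra factor $e^{-x}$.
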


\begin{proof}
By the equality \eqref{DefAMPBernoulliNs}, we have
\begin{align*}
&\sum_{s_1,\,\dots,\,s_r\geq 0}\sum_{n\geq0}B_n^{(-s_1,\,-s_2,\,\dots,\,-s_r;\,\boldsymbol{\epsilon})} \frac{x^n}{n!}\frac{y_1^{s_1}}{s_1!}\cdots \frac{y_r^{s_r}}{s_r!} \\
=&\sum_{s_1,\,\dots,\,s_r\geq 0}\frac{\mathrm{Li}_{(-s_1,\,-s_2,\,\dots,\,-s_r)}((1-e^{-x})\epsilon_1,\,\epsilon_2,\,\dots,\,\epsilon_r)}{1-e^{-x}}\frac{y_1^{s_1}}{s_1!}\cdots \frac{y_r^{s_r}}{s_r!} \\
=&\sum_{s_1,\,\dots,\,s_r\geq 0}(1-e^{-x})^{-1}\sum_{m_1>\cdots>m_r>0}\frac{\epsilon_1^{m_1}\cdots{\epsilon}_r^{m_r}(1-e^{-x})^{m_1}}{m_1^{-s_1}\cdots m_r^{-s_r}}    \frac{y_1^{s_1}}{s_1!}\cdots \frac{y_r^{s_r}}{s_r!} \\
=&(1-e^{-x})^{-1} \sum_{m_1>\cdots>m_r>0}\epsilon_1^{m_1}\cdots{\epsilon}_r^{m_r}(1-e^{-x})^{m_1} \sum_{s_1,\,\dots,\,s_r\geq 0}\frac{(m_1 y_1)^{s_1}}{s_1!}\cdots\frac{(m_r y_r)^{s_r}}{s_r!}\\
=&(1-e^{-x})^{-1} \sum_{m_1>\cdots>m_r>0}\epsilon_1^{m_1}\cdots{\epsilon}_r^{m_r}(1-e^{-x})^{m_1} e^{m_1 y_1}\cdots e^{m_r y_r};
\end{align*}
in what follows we continue calculation by putting $n_1:=m_1-m_2,\, n_2:=m_2-m_3,\,\dots,\, n_{r-1}:=m_{r-1}-m_r$ and $n_r:=m_r$
\begin{align*}
=&(1-e^{-x})^{-1} \sum_{n_1,\,\dots,\,n_r\geq1}\epsilon_1^{n_1+\cdots+n_r}\cdots{\epsilon}_r^{n_r}(1-e^{-x})^{n_1+\cdots+n_r} e^{(n_1+\cdots+n_r) y_1}\cdots e^{n_r y_r}\\
=&(1-e^{-x})^{-1} \sum_{n_1,\,\dots,\,n_r\geq1} \{\epsilon_1 (1-e^{-x})e^{y_1}\}^{n_1}   \{\epsilon_1\epsilon_2 (1-e^{-x})e^{y_1+y_2}\}^{n_2}\cdots\\
&\cdots \{\epsilon_1\cdots \epsilon_r (1-e^{-x})e^{y_1+\cdots+y_r}\}^{n_r} \\
=&(1-e^{-x})^{-1} \frac{\epsilon_1 (1-e^{-x})e^{y_1}}{1-(\epsilon_1 (1-e^{-x})e^{y_1})} \frac{\epsilon_1\epsilon_2 (1-e^{-x})e^{y_1+y_2}}{1-(\epsilon_1\epsilon_2 (1-e^{-x})e^{y_1+y_2})}\cdots\\
&\cdots\frac{\epsilon_1\cdots \epsilon_r (1-e^{-x})e^{y_1+\cdots+y_r}}{1-(\epsilon_1\cdots \epsilon_r (1-e^{-x})e^{y_1+\cdots+y_r})}\\
=&(1-e^{-x})^{-1} \frac{ 1-e^{-x}}{\epsilon_1 e^{-y_1}+e^{-x}-1} \cdots
\frac{ 1-e^{-x}}{\epsilon_1\,\cdots\epsilon_r e^{-y_1-\cdots-y_r}+e^{-x}-1}\\
=&\frac{(1-e^{-x})^{r-1}}{(\epsilon_1 e^{-y_1}+e^{-x}-1)(\epsilon_1 \epsilon_2 e^{-y_1-y_2}+e^{-x}-1)\cdots(\epsilon_1\,\cdots\epsilon_r e^{-y_1-\cdots-y_r}+e^{-x}-1)}, 
\end{align*}
(note $\epsilon_i^{-1}=\epsilon_i$ because $\epsilon_i=\pm1$). Hence the first equality holds. By the equality \eqref{DefAMPCernoulliNs}, we have
\begin{align*}
&\sum_{s_1,\,\dots,\,s_r\geq 0}\sum_{n\geq0} C_n^{(-s_1,\,-s_2,\,\dots,\,-s_r;\,\boldsymbol{\epsilon})} \frac{x^n}{n!}\frac{y_1^{s_1}}{s_1!}\cdots \frac{y_r^{s_r}}{s_r!}\\
=&e^{-x}\sum_{s_1,\,\dots,\,s_r\geq 0}\sum_{n\geq0} B_n^{(-s_1,\,-s_2,\,\dots,\,-s_r;\,\boldsymbol{\epsilon})} \frac{x^n}{n!}\frac{y_1^{s_1}}{s_1!}\cdots \frac{y_r^{s_r}}{s_r!}\\
=&\frac{e^{-x}(1-e^{-x})^{r-1}}{(\epsilon_1 e^{-y_1}+e^{-x}-1)(\epsilon_1 \epsilon_2 e^{-y_1-y_2}+e^{-x}-1)\cdots(\epsilon_1\,\cdots\epsilon_r e^{-y_1-\cdots-y_r}+e^{-x}-1)}.
\end{align*}
\end{proof}
%

\subsubsection{Connection with Stirling numbers}

The following is an alternating extension of Theorem \ref{Imatomithem}:

\begin{theorem} \label{alBernoulliandStirling}
If we take $\bold{s} \in \mathbb{Z}^r$ and $\boldsymbol{\epsilon} \in \{ \pm1\}^r$ as in Definition \ref{altMPBN}; then we have the following equality for $n\in \mathbb{N}_{>0}$:
\begin{equation}
C_n^{\bold{s};\,\boldsymbol{\epsilon}}=(-1)^n \sum_{n+1\geq m_1>\cdots>m_r>0} \frac{\epsilon_1^{m_1}\cdots\epsilon_r^{m_r}(-1)^{m_1-1}(m_1-1)!\left\{
    \begin{array}{c}
     n+1\\
     m_1
    \end{array}
  \right\}}
{m_1^{s_1}\cdots m_r^{s_r}}.
\end{equation}
\end{theorem}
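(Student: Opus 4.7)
The plan is to mimic the proof of Theorem \ref{Imatomithem} in the non-alternating case, tracking how the additional signs $\epsilon_i$ travel through the generating function computation. The starting point is the defining series \eqref{DefAMPCernoulliNs}, and I will compare coefficients of $x^n/n!$ on both sides after rewriting the right-hand side as an explicit power series.

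First, I would expand the multiple polylogarithm on the right-hand side of \eqref{DefAMPCernoulliNs} directly from its definition, obtaining
\begin{equation*}
\sum_{n\geq 0}C_n^{\bold{s};\,\boldsymbol{\epsilon}}\frac{x^n}{n!}
=\sum_{m_1>\cdots>m_r\geq 1}\frac{\epsilon_1^{m_1}\cdots\epsilon_r^{m_r}}{m_1^{s_1}\cdots m_r^{s_r}}\,e^{-x}(1-e^{-x})^{m_1-1},
\end{equation*}
since the single factor of $1-e^{-x}$ coming from $\epsilon_1^{m_1}(1-e^{-x})^{m_1}$ cancels with the denominator. The alternating signs $\epsilon_2,\dots,\epsilon_r$ are untouched by this step; only $\epsilon_1^{m_1}$ interacts with the variable $x$ through $m_1$, but it is still just a constant for each summand.

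Next, I would invoke the Stirling identity \eqref{diffdef}. Substituting $x\mapsto -x$ and using $(e^{-x}-1)^{m_1-1}=(-1)^{m_1-1}(1-e^{-x})^{m_1-1}$ gives the expansion
\begin{equation*}
e^{-x}(1-e^{-x})^{m_1-1}
=\sum_{n\geq m_1-1}(-1)^{n+m_1-1}(m_1-1)!\left\{\begin{array}{c}n+1\\ m_1\end{array}\right\}\frac{x^n}{n!}.
\end{equation*}
Plugging this into the previous identity, swapping the order of summation, and comparing the coefficient of $x^n/n!$ yields
\begin{equation*}
C_n^{\bold{s};\,\boldsymbol{\epsilon}}
=\sum_{\substack{m_1>\cdots>m_r\geq 1\\ m_1\leq n+1}}\frac{\epsilon_1^{m_1}\cdots\epsilon_r^{m_r}(-1)^{n+m_1-1}(m_1-1)!\left\{\begin{array}{c}n+1\\ m_1\end{array}\right\}}{m_1^{s_1}\cdots m_r^{s_r}},
\end{equation*}
which, after pulling the factor $(-1)^n$ outside the sum, is precisely the claimed formula.

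There is no genuine obstacle here; the whole proof is the standard argument with $\epsilon_1^{m_1}\cdots\epsilon_r^{m_r}$ carried along as a passive multiplicative weight. The only mild care needed is in the sign bookkeeping for \eqref{diffdef} under $x\mapsto -x$ (to arrive at the sign $(-1)^{n+m_1-1}$), and in verifying that the summation range $m_1-1\leq n$ is exactly what produces the upper bound $m_1\leq n+1$ in the final formula. I would therefore present the proof in two short displays: the reduction to $e^{-x}(1-e^{-x})^{m_1-1}$, and the Stirling expansion followed by coefficient comparison.
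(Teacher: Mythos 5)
Your proposal is correct and follows essentially the same route as the paper: expand the Carlitz-free multiple polylogarithm term by term, rewrite $e^{-x}(1-e^{-x})^{m_1-1}$ via \eqref{diffdef} evaluated at $-x$ with the sign $(-1)^{m_1-1}$ extracted, and compare coefficients of $x^n$. The sign bookkeeping $(-1)^{n+m_1-1}$ and the truncation $m_1\leq n+1$ both match the paper's computation exactly.
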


\begin{proof}
By the formula \eqref{diffdef}, we have

\begin{align*}
&\sum_{n\geq0}C_n^{\bold{s};\,\boldsymbol{\epsilon}}\frac{x^n}{n!}=e^{-x}\frac{\mathrm{Li}_{\bold{s}}((1-e^{-x})\epsilon_1,\,\epsilon_2,\,\dots,\,\epsilon_r)}{1-e^{-x}}\\
=&\sum_{m_1>\cdots>m_r>0}\frac{e^{-x}(1-e^{-x})^{m_1-1} \epsilon_1^{m_1}\cdots \epsilon_r^{m_r}}{m_1^{s_1}\cdots m_r^{s_r}}\\
=&\sum_{m_1>\cdots>m_r>0}\frac{(m_1-1) !\epsilon_1^{m_1}\cdots \epsilon_r^{m_r}(-1)^{m_1-1}}{m_1^{s_1}\cdots m_r^{s_r}} \frac{e^{-x}(e^{-x}-1)^{m_1-1}}{(m_1-1)!}\\
=&\sum_{m_1>\cdots>m_r>0}\frac{(m_1-1)! \epsilon_1^{m_1}\cdots \epsilon_r^{m_r}(-1)^{m_1-1}}{m_1^{s_1}\cdots m_r^{s_r}} \sum_{n\geq m_1-1}
\left\{
    \begin{array}{c}
     n+1\\
     m_1
    \end{array}
  \right\}
\frac{(-x)^n}{n!}\\
=&\sum_{n\geq0}x^n (-1)^n \sum_{n+1\geq m_1>\cdots>m_r>0} \frac{\epsilon_1^{m_1}\cdots\epsilon_r^{m_r}(-1)^{m_1-1}(m_1-1)!\left\{
    \begin{array}{c}
     n+1\\
     m_1
    \end{array}
  \right\}}
{m_1^{s_1}\cdots m_r^{s_r}}.
\end{align*}
Then, comparing the coefficients $x^n$ for each $n$ results in the desired equality.
\end{proof}

\subsubsection{Connection with finite multiple zeta values}

For each $\bold{s}\in \mathbb{Z}^r$ and $\boldsymbol{\epsilon} \in \{\pm1\}^r$, the element $\zeta_{\mathcal{A}}(\bold{s};\,\boldsymbol{\epsilon})=(\zeta_{\mathcal{A}}(\bold{s};\,\boldsymbol{\epsilon})_l)_{\text{$l$:prime}}$ of $\mathcal{A}$ is defined by
\begin{equation*}
\zeta_{\mathcal{A}}(\bold{s};\,\boldsymbol{\epsilon})_l :=\sum_{l>n_1>\cdots>n_r>0}\frac{\epsilon_1^{n_1}\,\cdots\,\epsilon_r^{n_r}}{n_1^{s_1}\,\cdots n_r^{s_r}} \in \mathbb{Z}/l\mathbb{Z}.
\end{equation*}
We call these elements of $\mathcal{A}$ \textit{alternating finite multiple zeta values} (\textit{AFMZVs} for short). When $\bold{s} \in \mathbb{N}_{>0}^r$, these are special examples with the superbility $1$ of \textit{finite Euler sums} introduced by J.~Zhao (\cite{Zhao2015}). Sakugawa and Seki also treat these elements in \cite{Sakugawa2017}.

We have the following equality 

\begin{equation}
\zeta_\mathcal{A}(\bold{s};\,\boldsymbol{\epsilon})=\mathfrak{L}_{\mathcal{A},\,\bold{s}}(\epsilon_1,\,\dots,\,\epsilon_r) \label{alSeki}
\end{equation}
for each $\bold{s}$ and $\boldsymbol{\epsilon}$. This is an alternating extension of the equality \eqref{Seki}.

The following is a alternating extension of Theorem \ref{thmImatomi}.

\begin{theorem} \label{0result}
\begin{enumerate}
\item \label{(1)1.8}
For $r\in \mathbb{N}_{>0}$, $\bold{s}=(s_1,\,\dots,\,s_r)$ and $\boldsymbol{\epsilon}$=$(\epsilon_1$, $\dots$, $\epsilon_r)$, we have the following congruence for each odd prime $l$:
\begin{equation*}
\zeta_{\mathcal{A}}(\bold{s};\, \boldsymbol{\epsilon})_l\equiv -C_{l-2}^{(s_1 -1,\,s_2,\,\dots,\,s_r);\,\boldsymbol{\epsilon}} \text{  mod $l$}. \label{eqImatomi1} 
\end{equation*}

\item \label{(2)1.8}
For ${r^\prime} \in \mathbb{N}$, $\bar{\bold{s}} =(1,\,\dots,1,\,s_1,\,\dots,\,s_r)= \mathbb{Z}^{r+r^\prime}$ and $\bar{\boldsymbol{\epsilon}}$$=(1,\,\dots,\,1,\,\epsilon_1,\,\dots,$$\,\epsilon_r)$$ \in \{\pm1\}^{r+r^\prime}$, the following congruence holds for each odd prime $l$:
\begin{equation}
\zeta_{\mathcal{A}}(\bar{\bold{s}},\,\bar{\boldsymbol{\epsilon}})_l\equiv -C_{l-r^\prime-2}^{(s_1 -1,\,s_2,\,\dots,\,s_r);\,\boldsymbol{\epsilon}} \mod l. \label{eqImatomi2} 
\end{equation}
\end{enumerate}
\end{theorem}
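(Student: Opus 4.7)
The plan is to derive both congruences directly from Theorem \ref{alBernoulliandStirling}, which writes $C_n^{\bold{s};\,\boldsymbol{\epsilon}}$ as an explicit finite sum whose only ``analytic'' ingredient is the Stirling number of the second kind $\left\{\begin{array}{c}n+1\\m_1\end{array}\right\}$. The strategy in both parts is: specialize $n$ appropriately, convert this Stirling number to a Stirling number of the first kind mod $l$ via the duality \eqref{Stirlingduality}, interpret the latter as an elementary symmetric polynomial via the defining formula of $\left[\begin{array}{c}n\\m\end{array}\right]$, and finally rewrite everything in terms of the residues $1,2,\ldots,l-1$ modulo $l$ using the identity $1/j\equiv -1/(l-j)\pmod l$ together with Wilson's theorem.

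For part \eqref{(1)1.8}, I would substitute $n=l-2$ and shift the index to $(s_1-1,s_2,\ldots,s_r)$ into Theorem \ref{alBernoulliandStirling}. Since $l$ is odd, the prefactor $(-1)^{l-2}$ contributes a sign that cancels the outer $-$ on the right-hand side. The Stirling factor becomes $\left\{\begin{array}{c}l-1\\m_1\end{array}\right\}$, which by \eqref{Stirlingduality} equals $\left[\begin{array}{c}l-m_1\\1\end{array}\right]=(l-m_1-1)!$ mod $l$ (the boundary $m_1=l-1$ being verified directly from $\left\{\begin{array}{c}l-1\\l-1\end{array}\right\}=1$). A short manipulation using Wilson's theorem, expanding $(l-1)!\equiv -1\pmod l$ as $(l-1)(l-2)\cdots(l-m_1)\cdot(l-m_1-1)!$, shows that $(-1)^{m_1-1}(m_1-1)!(l-m_1-1)!\equiv 1/m_1\pmod l$. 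Substituting back collapses the sum precisely to the defining series of $\zeta_{\mathcal{A}}(\bold{s};\,\boldsymbol{\epsilon})_l$.

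For part \eqref{(2)1.8} the same substitution with $n=l-r'-2$ produces the Stirling factor $\left\{\begin{array}{c}l-r'-1\\m_1\end{array}\right\}\equiv \left[\begin{array}{c}l-m_1\\r'+1\end{array}\right]\pmod l$. The additional ingredient here is the identification
\[
\left[\begin{array}{c}l-m_1\\r'+1\end{array}\right]=e_{l-m_1-r'-1}(1,2,\ldots,l-m_1-1)=(l-m_1-1)!\cdot e_{r'}\!\left(1,\tfrac{1}{2},\ldots,\tfrac{1}{l-m_1-1}\right),
\]
obtained from the defining generating function of the first-kind Stirling numbers and the standard reciprocal symmetry of elementary symmetric polynomials. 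Applying $1/j\equiv -1/(l-j)\pmod l$ to each argument rewrites the set $\{1,1/2,\ldots,1/(l-m_1-1)\}$ as $\{-1/(m_1+1),\ldots,-1/(l-1)\}$, pulling out $(-1)^{r'}$ and producing the inner sum $\sum_{l-1\geq n_1>\cdots>n_{r'}>m_1}1/(n_1\cdots n_{r'})$. Combining this with the Wilson-theorem factor $1/m_1$ from part (1) and the outer sign $-(-1)^{l-r'-2}=(-1)^{r'}$, all $\pm1$'s cancel and the doubly nested sum reorganizes into $\sum_{l-1\geq n_1>\cdots>n_{r'}>m_1>\cdots>m_r>0}$, which is precisely $\zeta_{\mathcal{A}}(\bar{\bold{s}};\,\bar{\boldsymbol{\epsilon}})_l$; note that the apparent extra constraint $m_1\leq l-r'-1$ coming from the outer summation range in Theorem \ref{alBernoulliandStirling} is automatic from the nesting $m_1<n_{r'}<\cdots<n_1\leq l-1$.

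The main obstacle is careful sign bookkeeping, since no fewer than four sources of $\pm 1$ need to be tracked simultaneously (the $(-1)^n$ and $(-1)^{m_1-1}$ in Theorem \ref{alBernoulliandStirling}, the $(-1)^{r'}$ from the reciprocal transformation, and the global $-$ in the statement), together with verifying that the boundary cases of \eqref{Stirlingduality}—specifically $m_1=l-1$ in part (1) and $m_1=l-r'-1$ in part (2)—still match via the identities $\left\{\begin{array}{c}a\\a\end{array}\right\}=1$ and $\left[\begin{array}{c}a\\a\end{array}\right\}=1$. Everything else is a manipulation of nested finite sums that parallels the non-alternating argument of Imatomi--Kaneko--Takeda (Theorem \ref{thmImatomi}), with the $\epsilon_i^{m_i}$ factors carried along as inert constants since $\epsilon_i\in\{\pm1\}$ plays no role in the arithmetic modulo $l$.
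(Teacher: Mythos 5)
Your proposal is correct, and its overall architecture matches the paper's: both parts are deduced from Theorem \ref{alBernoulliandStirling} by specializing $n$ to $l-2$ (resp.\ $l-r'-2$) and shifting the first index, so that everything reduces to evaluating $(-1)^{m_1-1}(m_1-1)!\left\{\begin{array}{c}l-1\\m_1\end{array}\right\}$ (resp.\ the analogous quantity with $\left\{\begin{array}{c}l-r'-1\\m_1\end{array}\right\}$) modulo $l$. The routes diverge only in how those Stirling congruences are obtained. For part \eqref{(1)1.8} the paper uses the exact identity $m!\left\{\begin{array}{c}l-1\\m\end{array}\right\}=\sum_{s=0}^{m}\binom{m}{s}(-1)^{m-s}s^{l-1}$ together with Fermat's little theorem, which gives $(-1)^{m}m!\left\{\begin{array}{c}l-1\\m\end{array}\right\}\equiv-1$ uniformly for all $1\leq m\leq l-1$; you instead use the duality \eqref{Stirlingduality} plus Wilson's theorem. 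The two are arithmetically equivalent (your $(-1)^{m_1-1}(m_1-1)!(l-m_1-1)!\equiv 1/m_1$ is the paper's congruence rearranged), but your version must treat the boundary case $m_1=l-1$ separately, since \eqref{Stirlingduality} is only stated for $1\leq m<n<l$ --- you correctly flag this, and it does check out via Wilson's theorem. For part \eqref{(2)1.8} you run the paper's computation in the opposite direction (starting from $C_{l-r'-2}^{(s_1-1,\dots,s_r);\,\boldsymbol{\epsilon}}$ rather than from $\zeta_{\mathcal{A}}(\bar{\bold{s}};\,\bar{\boldsymbol{\epsilon}})_l$), with the same three ingredients: the duality, the reading of first-kind Stirling numbers as elementary symmetric polynomials in $1,\dots,l-m_1-1$, and the reflection $1/j\equiv-1/(l-j)$; your remark that the constraint $m_1\leq l-r'-1$ is automatic from the nesting is the same bookkeeping the paper performs implicitly. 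Neither direction buys anything substantial over the other; the content is identical, and your sign accounting is consistent.
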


\begin{proof}
First we note that the equality
\begin{equation*}
 m!\left\{
    \begin{array}{c}
     l-1\\
     m
    \end{array}
  \right\}
=\sum_{s=0}^{m}
\left(
    \begin{array}{c}
     m\\
     s
    \end{array}
  \right) 
  (-1)^{m-s}s^{l-1}
\end{equation*}
holds for each positive integer $m$ (\cite[\S6.1, (6.19)]{GrahamBook}). It implies the congruence
                                                                                                                                                                                                                                                                                                                                                                                                                                                                                                                                                                                                                                                                                                                                                                                                                                                                                                                                                                                                                                                                                                                                                                                                                                                                                                                                                  \begin{equation}
                                                                                                                                                                                                                                                                                                                                                                                                                                                                                                                                                                                                                                                                                                                                                                                                                                                                                                                                                                                                                                                                                                                                                                                                                                                                                                                                                                                     (-1)^m m!\left\{
    \begin{array}{c}
     l-1\\
     m
    \end{array}
  \right\}
\equiv
\sum_{s=1}^{m}
\left(
    \begin{array}{c}
     m\\
     s
    \end{array}
  \right)(-1)^s
= (1-1)^m-1=-1, \mod l  \label{ST}
   \end{equation}                                                                                                                                                                                                                                                                                                                                                                                                                                                                                                                                                                                                                                                                                                                                                                                                                                                                                                                                                                                                                                                                                                                                                                                                                                                                                                                                                                         since we have $s^{l-1}\equiv 1$ for $1\leq s\leq m<l$. By the congruence \eqref{ST}, we have
\begin{align*}
C_{l-2}^{(s_1-1,\,s_2,\,\dots,\,s_r);\,\boldsymbol{\epsilon}}
&=(-1)^{l-2} \sum_{l-1\geq m_1>\cdots>m_r>0} \frac{\epsilon_1^{m_1}\cdots\epsilon_r^{m_r}(-1)^{m_1-1}(m_1-1)!\left\{
    \begin{array}{c}
     l-1\\
     m_1
    \end{array}
  \right\}}
{m_1^{s_1-1}\cdots m_r^{s_r}}\\
&= \sum_{l-1\geq m_1>\cdots>m_r>0} \frac{\epsilon_1^{m_1}\cdots\epsilon_r^{m_r}(-1)^{m_1}(m_1)!\left\{
    \begin{array}{c}
     l-1\\
     m_1
    \end{array}
  \right\}}
{m_1^{s_1}\cdots m_r^{s_r}}\\
&\equiv \sum_{l-1\geq m_1>\cdots>m_r>0} \frac{\epsilon_1^{m_1}\cdots\epsilon_r^{m_r}(-1)}
{m_1^{s_1}\cdots m_r^{s_r}}
=-\zeta_{\mathcal{A}}(\bold{s};\, \boldsymbol{\epsilon})_l \mod l,
\end{align*}
hence we obtain the assertion \eqref{(1)1.8}.

We have
\begin{align*}
\zeta_{\mathcal{A}}(\bar{\bold{s}},\,\bar{\boldsymbol{\epsilon}})_l
&=\sum_{l>i_1>\cdots>i_{r^\prime}>m_1>\cdots>m_r>0}\frac{\epsilon_1^{m_1}\,\cdots\,\epsilon_r^{m_r}}{i_1 \cdots i_{r^\prime} m_1^{s_1}\,\cdots m_r^{s_r}}\\
&=\sum_{l-r^\prime>m_1>\cdots>m_r>0}\frac{\epsilon_1^{m_1}\,\cdots\,\epsilon_r^{m_r}}{m_1^{s_1}\,\cdots m_r^{s_r}} \sum_{l>i_1 >\cdots >i_{r^{\prime}} >m_1} \frac{1}{i_1 \cdots i_{r^\prime}}\\
&\equiv \sum_{l-r^\prime>m_1>\cdots>m_r>0}\frac{\epsilon_1^{m_1}\,\cdots\,\epsilon_r^{m_r}}{m_1^{s_1}\,\cdots m_r^{s_r}} \sum_{l-m_1>i_{r^\prime} >\cdots >i_1 \geq1} \frac{(-1)^{r^\prime}}{i_1 \cdots i_{r^\prime}}.\\
\end{align*}
The congruence of generating series
\begin{align*}
&\sum_{m=0}^{l-m_1-1} \left\{ \sum_{l-m_1-1\geq i_m>\cdots>i_1\geq1}\frac{(-1)^{r^\prime}}{i_1\cdots i_m} \right\}x^{m+1}\\
\equiv&(-1)^{r^\prime}\sum_{m=0}^{l-m_1-1} \left\{ \frac{1}{(l-m_1-1)!} \sum_{l-m_1-1\geq j_1>\cdots>j_{N-m} \geq1}{j_1\cdots j_{N-m}} \right\}x^{m+1}\mod l\\
=&\frac{(-1)^{r^\prime}}{(l-m_1-1)!}x(x+1)\cdots(x+l-m_1-1)
=\frac{(-1)^{r^\prime}}{(l-m_1-1)!}\sum_{m\geq0}^{l-m_1-1}\left[
    \begin{array}{c}
     l-m_1\\
     m+1
    \end{array}
  \right]x^{m+1}\\
\equiv&\frac{(-1)^{r^\prime}}{(l-m_1-1)!}\sum_{m\geq0}^{l-m_1-1}\left\{
    \begin{array}{c}
     l-m-1\\
    m_1
   \end{array}
  \right\}x^{m+1} \mod l\\
\equiv&(-1)^{r^\prime+m_1+1}m_1!
\sum_{m\geq0}^{l-m_1-1}\left\{
    \begin{array}{c}
     l-m-1\\
    m_1
   \end{array}
  \right\}x^{m+1} \mod l  
\end{align*}
follows from \eqref{Stirlingduality}. Hence we obtain
\begin{align*}
\zeta_{\mathcal{A}}(\bar{\bold{s}},\,\bar{\boldsymbol{\epsilon}})_l
&\equiv
\sum_{l-r^\prime>m_1>\cdots>m_r>0}\frac{\epsilon_1^{m_1}\,\cdots\,\epsilon_r^{m_r} (-1)^{m_1+r^\prime+1} m_1 ! 
\left\{
    \begin{array}{c}
     l-r^\prime-1\\
     m_1
    \end{array}
  \right\}}{m_1^{s_1}\,\cdots m_r^{s_r}}\\
&=-C_{l-r^\prime-2}^{(s_1 -1,\,s_2,\,\dots,\,s_r);\,\boldsymbol{\epsilon}}.
\end{align*}
\end{proof}

\section{Characteristic $p$}

In this section, we consider the characteristic $p$ analogues of the notions and results in the previous section. After the review on results in \cite{Harada2018} of Harada, we generalize his results to alternating setting. We introduce alternating extension of multiple poly-Bernoulli-Carlitz numbers (Definition \ref{DefMPBCN}) and establish their connection with Stirling-Carlitz numbers (Theorems \ref{thmMPBCNandStirling}). In Theorems \ref{FAMZVandMCPL}, we write alternating extension of finite multiple zeta values down in terms of special values of finite Carlitz multiple polylogarithm defined in \cite{Chang2017}. We obtain the relationship between alternating extensions of multiple poly-Bernoulli-Carlitz numbers and finite multiple zeta values (Theorem \ref{FAMZVandMPBCN}).

\subsection{Review on Harada's multiple poly-Bernoulli numbers} \label{Review on Harada's MPBCNs}


We fix a prime $p$ and its power $q$. The symbol $A$ denotes the polynomial ring $\mathbb{F}_q[\theta]$ in $\theta$ over the finite field $\mathbb{F}_q$ of $q$ elements and $k$ stands for the field $\mathbb{F}_q(\theta)$ of rational functions.

For each $n \in\mathbb{N}_{>0}$, the element $\theta^{q^n}-\theta$ of the set $A_+$ (of all monic polynomials) is denoted by $[n]$. We put
$D_n:=[n]^{q^0}[n-1]^{q^1}\cdots[1]^{q^{n-1}}\in A_+$, 
$L_n:=[n][n-1]\cdots[1](-1)^n \in A$ for $n\geq1$ and $D_0= L_0:=1$ (\cite{Carlitz1935,GossBook}).

For $n \in \mathbb{N}$ with the $q$-adic expansion
$
n=\sum_{j=0}^{d} \alpha_j q^j \quad(0\leq \alpha_j < q)
$,
we put
$
\Gamma_{n+1}:=\Pi (n):=\prod_{j=0}^{d} D_j^{\alpha_j} \in A_+$, which are called \textit{the Carlitz gamma} and \textit{the Carlitz factorial}  respectively, see \cite{Carlitz1935, GossBook}.
For each $d\in \mathbb{N}$ and $s\in\mathbb{Z}$, the sum 
$
\sum_{a} \frac{1}{a^s} \in k
$
(where $a$ runs through all monic polynomials of degree $d$ in $A$) is denoted by $S_d(s)$


Following \cite{Anderson1990}, we define polynomials $\mathfrak{H}_n(t,y)\in \mathbb{F}_q(t,y)\;(n\geq0)$ by 

\begin{equation*}
\sum_{n\geq0}\frac{\mathfrak{H}_n(t,y)}{\Gamma_{n+1}|_{\theta=t}}x^n=\left(1-\sum_{i\geq0}\frac{G_i(t,y)}{D_i|_{\theta=t}}x^{q^i}\right)^{-1} \in \mathbb{F}_q(t,\,y)[[x]] \label{eqDefofAT}
\end{equation*}
where $G_n(t,y):=\prod_{i=1}^{n}(t^{q^n}-y^{q^i})$ and we put $H_n(t) :=\mathfrak{H}_n(t,\theta) \in A[t]$; these are called the \textit{Anderson-Thakur polynomials}.
If we write
\begin{equation*}
H_{n}(t)=\sum_{j=0}^{m_{n+1}}u_{{n+1},\,j} t^{j} \text{,  with $u_{i,\,j} \in$ A and $m_{n+1}\neq 0$},
\end{equation*}
for $n\geq0$, then we have
\begin{equation*}
H_{n-1}^{{(d)}}(t)|_{t=\theta}:=\left(\sum_{j=0}^{m_n}u_{n,\,j}^{{q^d}} t^{j}\middle)\right|_{t=\theta}=L_d^{n}\Gamma_{n}S_d(n).
\end{equation*}
for any $d\in\mathbb{N}$ and $n \in \mathbb{N}_{>0}$, see \cite{Anderson1990}.
For each $\bold{s} =(s_1,\,\dots,\,s_r)\in \mathbb{N}_{>0}^r$, we put
\begin{equation*}
\mathfrak{J}_\bold{s}:=\{ \bold{j}=(j_1,\,\dots,\,j_r) \in \mathbb{N}^r \mid\text{$0\leq j_i \leq \deg_t H_{s_i-1}$ holds for any $1\leq i\leq r$.}\}
\end{equation*}
and denote $\theta^{j_1+\cdots+j_r}$ by $\theta^{\bold{j}}$ for short.

We define formal power series $e_C(z)$ (\cite{Carlitz1935}) and $\mathrm{Li}_\bold{s}(z_1,\,\dots,\,z_r)$ for all $\bold{s}=(s_1,\dots,s_r) \in \mathbb{N}^r$ (\cite{Chang2014}) respectively called the \textit{Carlitz exponential} and the \textit{Carlitz multiple polylogarithm} by
\begin{equation*}
e_C(z):=\sum_{i\geq 0}\frac{z^{q^i}}{D_i} \in k((z)), \  \mathrm{Li}_{\bold{s}}(z_1,\,\dots,\,z_r):=\sum_{i_1>\cdots>i_r\geq0}  \frac{z_1^{q^{i_1}}\cdots z_r^{q^{i_r}}}{L_{i_1}^{s_1}\cdots L_{i_r}^{s_r}}\in k((z_1,\dots,z_r)).
\end{equation*}
These are analogues of exponential and multiple polylogarithm functions.
\begin{definition}[{\cite[Definition 21]{Harada2018}}]\label{DefMPBCNoriginal}
For each $\bold{s}=(s_1,\dots,s_r) \in \mathbb{N}^r$ and $\bold{j}=(j_1,\,\dots,\,j_r) \in\mathfrak{J}_{\bold{s}}$, \textit{multiple poly-Bernoulli-Carlitz numbers} (\textit{MPBCNs} for short) $BC_n^{\bold{s},\,\bold{j}}$ are elements of $k$ defined by
\begin{equation*}
\sum_{n\geq0}BC_n^{\bold{s},\,\bold{j}}\frac{z^n}{\Pi(n)}:=\frac{\mathrm{Li}_{\bold{s}}(e_C(z)u_{s_1,\,j_1},\, u_{s_2,\,j_2},\dots,\, u_{s_r,\,j_r})}{e_C(z)}.
\end{equation*}
\end{definition}

The validity of the analogue of Proposition \ref{0dual} seems unclear since Harada's multiple poly-Bernoulli-Carlitz numbers are defined only in the case when $s_i$ are positive integers.

\subsubsection{Connection with Stirling-Carlitz numbers}

Let us recall the definition and some properties of the analogues of Stirling numbers (of the second kind) introduced in \cite{Kaneko2016}.

The \textit{Stirling-Carlitz numbers (of the second kind)} 
$
\left\{
    \begin{array}{c}
     n\\
     m
    \end{array}
  \right\}_C
$ ($n,\,m\in\mathbb{Z}$)
are defined by
\begin{equation*}
	\frac{(e_C(z))^m}{\Pi(m)}=\sum_{n\geq0}
	\left\{
	    \begin{array}{c}
	     n\\
	     m
	    \end{array}
	  \right\}_C
	\frac{z^n}{\Pi(n)}.
\end{equation*}
The definition is due to H.~Kaneko and T.~Komatsu (\cite{Kaneko2016}). We note that they also introduced analogues of the first kind Stirling numbers in \cite[\S 2]{Kaneko2016}.

We remind the following formulae required later:
\begin{align}
\left\{
    \begin{array}{c}
     n\\
     m
    \end{array}
  \right\}_C
  &=0 \text{  if $n<m$},  \quad 
\left\{
    \begin{array}{c}
     q^n-1\\
     q^m-1
    \end{array}
  \right\}_C
  =\begin{cases}
    0& n\neq m \\
    1& n=m
  \end{cases}. \label{Stirling2}
\end{align}
See \cite[(17)]{Kaneko2016} and \cite[(10)]{Harada2018} respectively for their proofs.

The following is an analogue of Theorem \ref{Imatomithem} obtained by Harada (\cite{Harada2018}), which describes MPBCNs as finite sums in terms of Stirling-Carlitz numbers.

\begin{theorem}[{\cite[Theorem 2.7]{Harada2018}}]\label{thmMPBCNandStirlingoriginal}
If $\bold{s}$ and $\bold{j}$ are as in the Definition \ref{DefMPBCNoriginal}, then the following equality in $k$ holds:
\begin{equation*}
BC_n^{\bold{s},\,\bold{j}}=\sum_{\log_q(n+1)\geq d_1>\cdots>d_r\geq0}\Gamma_{q^{d_1}}
\left\{
    \begin{array}{c}
     n\\
     q^{d_1}-1
    \end{array}
  \right\}_C \frac{u_{s_1,j_1}^{q^{d_1}}\cdots u_{s_r,j_r}^{q^{d_r}}}{L_{d_1}^{s_1}\cdots L_{d_r}^{s_r}}.
\end{equation*}

\end{theorem}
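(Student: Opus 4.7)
The plan is to start from the defining generating series for $BC_n^{\bold{s},\,\bold{j}}$ and substitute the series definition of the Carlitz multiple polylogarithm. This yields
\begin{align*}
\sum_{n\geq0}BC_n^{\bold{s},\,\bold{j}}\frac{z^n}{\Pi(n)}
&=\frac{1}{e_C(z)}\sum_{i_1>\cdots>i_r\geq0}\frac{(e_C(z)u_{s_1,j_1})^{q^{i_1}}u_{s_2,j_2}^{q^{i_2}}\cdots u_{s_r,j_r}^{q^{i_r}}}{L_{i_1}^{s_1}\cdots L_{i_r}^{s_r}}\\
&=\sum_{i_1>\cdots>i_r\geq0}\frac{u_{s_1,j_1}^{q^{i_1}}\cdots u_{s_r,j_r}^{q^{i_r}}}{L_{i_1}^{s_1}\cdots L_{i_r}^{s_r}}\,e_C(z)^{q^{i_1}-1},
\end{align*}
where the $1/e_C(z)$ factor combines with a single power of $e_C(z)^{q^{i_1}}$ to give $e_C(z)^{q^{i_1}-1}$ (the remaining arguments of the MPL carry no dependence on $z$).

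Next, I would apply the defining generating series of the Stirling-Carlitz numbers with $m=q^{d_1}-1$, which gives
\[
e_C(z)^{q^{d_1}-1}=\Pi(q^{d_1}-1)\sum_{n\geq0}\left\{\begin{array}{c}n\\q^{d_1}-1\end{array}\right\}_C\frac{z^n}{\Pi(n)},
\]
and then use the identity $\Pi(q^{d_1}-1)=\Gamma_{q^{d_1}}$ coming directly from the definitions of the Carlitz gamma and Carlitz factorial. Substituting this expansion into the previous display and extracting the coefficient of $z^n/\Pi(n)$ yields
\[
BC_n^{\bold{s},\,\bold{j}}=\sum_{i_1>\cdots>i_r\geq0}\Gamma_{q^{i_1}}\left\{\begin{array}{c}n\\q^{i_1}-1\end{array}\right\}_C\frac{u_{s_1,j_1}^{q^{i_1}}\cdots u_{s_r,j_r}^{q^{i_r}}}{L_{i_1}^{s_1}\cdots L_{i_r}^{s_r}}.
\]

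Finally, I would invoke the vanishing property from \eqref{Stirling2}, namely that $\left\{\begin{array}{c}n\\m\end{array}\right\}_C=0$ whenever $n<m$. Taking $m=q^{d_1}-1$, this kills every term with $q^{d_1}-1>n$, i.e.\ every term with $d_1>\log_q(n+1)$, so the sum truncates to $\log_q(n+1)\geq d_1>\cdots>d_r\geq 0$ as claimed.

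The argument is essentially a formal manipulation of power series, so there is no serious obstacle; the main points requiring care are (i)~the bookkeeping of the $e_C(z)$ factors when pulling the exponent $q^{i_1}$ outside the MPL, (ii)~the identification $\Pi(q^{d_1}-1)=\Gamma_{q^{d_1}}$, and (iii)~the use of the vanishing in \eqref{Stirling2} to obtain the stated upper bound on $d_1$.
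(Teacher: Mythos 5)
Your proposal is correct and is essentially the same argument the paper uses for the alternating generalization (Theorem \ref{thmMPBCNandStirling}), of which this statement is the case $\boldsymbol{\gamma}=(1,\dots,1)$: expand the Carlitz multiple polylogarithm, absorb $e_C(z)^{-1}$ into $e_C(z)^{q^{d_1}}$, invoke the Stirling--Carlitz generating series together with $\Pi(q^{d_1}-1)=\Gamma_{q^{d_1}}$, truncate via the vanishing in \eqref{Stirling2}, and compare coefficients. No gaps.
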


\subsubsection{Connection with finite multiple zeta values}
Characteristic $p$ analogues of FMZVs are introduced in \cite{Chang2017}. 
Let $\mathcal{A}_k$ be the quotient ring $\prod_P(A/(P))/\bigoplus_P (A/(P))$. Here, the symbol $P$ runs through the set $\operatorname{Spm}A$ of all monic irreducible polynomials in $A$. 
The ring $\mathcal{A}_k$ is naturally equipped with  $k$-algebra structure.

\begin{definition}[{\cite[${\S}$2]{Chang2017}}]\label{pFMZV}
For each $\bold{s} \in \mathbb{Z}^r$, the element $\zeta_{\mathcal{A}_k}(\bold{s})=(\zeta_{\mathcal{A}_k}(\bold{s})_P)_{P\in\operatorname{Spm}A}$ of $\mathcal{A}_k$ is defined by
\begin{equation*}
\zeta_{\mathcal{A}_k}(\bold{s})_P :\equiv\sum_{\substack{\deg P>\deg{a_1}>\cdots>\deg a_r\geq0\\a_i\text{ monic}}}\frac{1}{a_1^{s_1}\,\cdots a_r^{s_r}} \in A/(P);
\end{equation*}
these elements of $\mathcal{A}_k$ are called \textit{finite multiple zeta values} (\textit{FMZV} for short). We call the natural number $r$ the \textit{depth} of the FMZV $\zeta_{\mathcal{A}_k}(\bold{s})$.
\end{definition}

Chang and Mishiba introduced the \textit{finite Carlitz multiple polylogarithm}  $\mathrm{Li}_{\mathcal{A}_k,\,\bold{s}}(z)$ (\textit{FCMPL} for short) as a finite variant of CMPL.
For any $\textbf{s}=(s_1,s_2,\dots,s_r) \in \mathbb{N}^r$ and tuple $\bold{a}=((a_{P,\,1})_P,\,\dots,\,(a_{P,\,r})_P) \in \mathcal{A}_k^r$ with $a_{P,\,1}\in A/P$, the value $\mathrm{Li}_{\mathcal{A}_k,\,\bold{s}}(\bold{a})=(\mathrm{Li}_{\mathcal{A}_k,\,\bold{s}}(\bold{a})_P)_{P\in\operatorname{Spm}A}$ at $\bold{a}\in \mathcal{A}_k$ is given by
\begin{equation*}
\mathrm{Li}_{\mathcal{A}_k,\,\bold{s}}(\bold{a})_P:\equiv\sum_{\deg P>i_1>\cdots>i_r\geq0} \frac{a_{P,\,1}^{q^{i_1}}\,\cdots \,a_{P,\,r}^{q^{i_r}}}{L_{i_1}^{s_1}\,\cdots \, L_{i_r}^{s_r}} \in A/(P).
\end{equation*}
It is obvious that the value $\mathrm{Li}_{\mathcal{A}_k,\,\bold{s}}(\bold{a})$ is independent on the choices of representatives of $(a_{P,\,1}),\,\dots,\,(a_{P,\,r-1})$ and $(a_{P,\,r})$. See \cite[\S 3]{Chang2017} for the precise definition of the finite Carlitz multiple polylogarithm.

This definition of FCMPLs coincides with that in \cite{Chang2017} if restricted on the subset $k^r$ of $\mathcal{A}_k^r$.

Chang and Mishiba obtained the following analogue of the equality \eqref{Seki}:

\begin{theorem}[{\cite[Theorem 3.7]{Chang2017}}]\label{CM2017maindef}
For all $\textbf{s}\in \mathbb{N}_{>0}^r$, the equations
\begin{equation*}
\zeta_{\mathcal{A}_k}(\bold{s})=\frac{1}{\Gamma_{s_1}\cdots\Gamma_{s_r}}\sum_{\bold{j}\in\mathfrak{J}_\bold{s}}
\theta^{\bold{j}}\mathrm{Li}_{\mathcal{A}_k,\,\bold{s}}(u_{s_1,\,j_1},\dots,\,u_{s_r,\,j_r})
\end{equation*}
hold in $\mathcal{A}_k$.
\end{theorem}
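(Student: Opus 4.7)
The plan is to fix a monic irreducible $P\in\operatorname{Spm}A$, verify the asserted equality in each component $A/(P)$, and use the Anderson--Thakur identity $H_{n-1}^{(d)}(t)|_{t=\theta} = L_d^n \Gamma_n S_d(n)$ as the sole bridge. The whole argument amounts to reorganising a triple sum, with no genuine new analytic input beyond that identity.

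First I would expand the left-hand side in terms of power sums. Grouping the monic polynomials $a_1,\dots,a_r$ in the defining series for $\zeta_{\mathcal{A}_k}(\bold{s})_P$ by their degrees and using $S_d(s) = \sum_{a\in A_+,\,\deg a=d} a^{-s}$, one obtains
\[
\zeta_{\mathcal{A}_k}(\bold{s})_P \equiv \sum_{\deg P > d_1 > \cdots > d_r \geq 0} S_{d_1}(s_1)\cdots S_{d_r}(s_r) \pmod{P}.
\]
Next I would unfold the right-hand side. Substituting the series definition of the FCMPL at $(u_{s_1,j_1},\dots,u_{s_r,j_r})$, swapping the order of summation between $\bold{j}\in\mathfrak{J}_{\bold{s}}$ and $(d_1,\dots,d_r)$, and factoring the inner $\bold{j}$-sum index-by-index produces
\[
\frac{1}{\Gamma_{s_1}\cdots\Gamma_{s_r}} \sum_{\deg P > d_1 > \cdots > d_r \geq 0} \frac{1}{L_{d_1}^{s_1}\cdots L_{d_r}^{s_r}} \prod_{i=1}^{r} \left( \sum_{j_i=0}^{m_{s_i}} u_{s_i,j_i}^{q^{d_i}} \theta^{j_i} \right).
\]
The bracketed factor in position $i$ is precisely $H_{s_i-1}^{(d_i)}(t)|_{t=\theta}$, so the Anderson--Thakur identity replaces it with $L_{d_i}^{s_i}\Gamma_{s_i} S_{d_i}(s_i)$. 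The $L_{d_i}^{s_i}$ then cancel against the denominators and the $\Gamma_{s_i}$ against the overall prefactor, leaving exactly the power-sum expansion obtained in the first step. Since $P$ was arbitrary, the two sides agree in $\mathcal{A}_k$.

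The main obstacle is essentially bookkeeping rather than new mathematics. The substantive points to check are: (i) each denominator $L_{d_i}$ is coprime to $P$ whenever $d_i < \deg P$, which follows from $[j] = \theta^{q^j} - \theta$ being the product of all monic irreducibles of degree dividing $j$, so $P \nmid L_{d}$ for $d < \deg P$; (ii) the finite double sum in $\bold{j}$ and $(d_1,\dots,d_r)$ may be freely interchanged and the inner $\bold{j}$-sum factored across the product over $i$; and (iii) the strict-inequality ranges $\deg P > d_1 > \cdots > d_r \geq 0$ match on both sides of the computation. Once these routine points are in place, the Anderson--Thakur identity carries the entire proof.
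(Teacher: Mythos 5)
Your proof is correct and is essentially the argument the paper itself relies on: the statement is quoted from Chang--Mishiba, but the paper's proof of its alternating generalization, Theorem \ref{FAMZVandMCPL}, is exactly your computation (expand both sides over tuples of degrees and apply the Anderson--Thakur identity $H_{s-1}^{(d)}(t)|_{t=\theta}=L_d^{s}\Gamma_{s}S_d(s)$ to the factored inner sum over $\bold{j}$), with the characters $\epsilon_i$, $\gamma_i$ carried along. The only point your ``since $P$ was arbitrary'' glosses over is that one should restrict to the cofinitely many $P$ with $P\nmid\Gamma_{s_i}$, which suffices in $\mathcal{A}_k$ and is exactly how the paper phrases it.
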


Using elements $BC_n^{\bold{s},\,\bold{j}}$ of $k$, we can write down FMZVs as follows:

\begin{theorem}[{\cite[Theorem 32]{Harada2018}}]\label{FMZVandMPBCN}
\begin{enumerate}
\item
For $\textbf{s} \in \mathbb{N}_{>0}^r$, the congruence
\begin{equation}
\zeta_{\mathcal{A}_k}(\bold{s})_P
\equiv \frac{1}{\Gamma_{s_1}\cdots\Gamma_{s_r}}\sum_{\bold{j}\in\mathfrak{J}_\bold{s}}
\theta^{\bold{j}}  \sum_{d=r-1}^{\operatorname{deg}P-1}\frac{BC_{q^d-1}^{\bold{s},\,\bold{j}}}{L_d BC_{q^d-1}}  \label{eqFMZVandMPBCN}
\end{equation}
in the residue field $A/(P)$ holds for any $P \in \operatorname{Spm}A$ such that $P\not| \ \Gamma_{s_i}$ for $1\leq i\leq r$.
\item
Moreover,   if $r^\prime \in \mathbb{N}$ and $\bar{\bold{s}}=(1,\,\dots,1,\,s_1,\,\dots,\,s_r) \in \mathbb{N}^{r+r^\prime}$, the congruence
\begin{equation}
\zeta_{\mathcal{A}_k}(\bar{\bold{s}})_P
\equiv \frac{1}{\Gamma_{s_1}\cdots\Gamma_{s_r}}\sum_{\bold{j}\in\mathfrak{J}_\bold{s}}
\theta^{\bold{j}}  \sum_{\deg P > d_0>\cdots>d_{r^\prime}\geq r-1}\frac{BC_{q^{d_{r^\prime}}-1}^{\bold{s},\,\bold{j}}}{L_{d_0}\cdots L_{d_{r^\prime}} BC_{q^{d_{r^\prime}}-1}}  \label{eqFMZVandMPBCN2}
\end{equation}
in $A/(P)$ holds for any $P \in \operatorname{Spm}A$ such that $P\not| \ \Gamma_{s_i}$ for $1\leq i\leq r$.
\end{enumerate}
\end{theorem}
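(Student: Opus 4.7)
The plan is to deduce both parts from Chang--Mishiba's Theorem \ref{CM2017maindef} combined with Harada's expression of MPBCNs via Stirling--Carlitz numbers (Theorem \ref{thmMPBCNandStirlingoriginal}). Applying Theorem \ref{CM2017maindef} to $\bold{s}$ reduces part (1), for each fixed $\bold{j} \in \mathfrak{J}_{\bold{s}}$, to the congruence
\begin{equation*}
\mathrm{Li}_{\mathcal{A}_k,\,\bold{s}}(u_{s_1,\,j_1},\dots,u_{s_r,\,j_r})_P \;\equiv\; \sum_{d=r-1}^{\deg P-1} \frac{BC_{q^d-1}^{\bold{s},\,\bold{j}}}{L_d\, BC_{q^d-1}} \pmod{P},
\end{equation*}
which carries the genuine combinatorial content of the theorem.

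The first step is to evaluate $BC_{q^d-1}^{\bold{s},\,\bold{j}}$ via Theorem \ref{thmMPBCNandStirlingoriginal} at $n = q^d-1$. The diagonal identity $\left\{\begin{array}{c} q^d-1 \\ q^{d_1}-1 \end{array}\right\}_C = \delta_{d,\,d_1}$ from \eqref{Stirling2} collapses the outermost index to $d_1 = d$, giving
\begin{equation*}
BC_{q^d-1}^{\bold{s},\,\bold{j}} = \Gamma_{q^d} \sum_{d > d_2 > \cdots > d_r \geq 0} \frac{u_{s_1,\,j_1}^{q^d}\, u_{s_2,\,j_2}^{q^{d_2}} \cdots u_{s_r,\,j_r}^{q^{d_r}}}{L_d^{s_1} L_{d_2}^{s_2} \cdots L_{d_r}^{s_r}}.
\end{equation*}
The same computation applied to the scalar case $\bold{s} = (1)$, $\bold{j} = (0)$ (in which $u_{1,\,0} = 1$ because $H_0(t) = 1$, and the identity $\mathrm{Li}_1(e_C(z)) = z$ reduces Definition \ref{DefMPBCNoriginal} to Carlitz's original generating function $z/e_C(z)$) yields $BC_{q^d-1} = \Gamma_{q^d}/L_d$. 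Dividing and reindexing $i_1 := d$, $i_k := d_k$ for $k \geq 2$, I recover term-by-term the defining sum of $\mathrm{Li}_{\mathcal{A}_k,\,\bold{s}}(u_{s_1,\,j_1},\dots,u_{s_r,\,j_r})_P$, proving part (1). The lower limit $d \geq r-1$ is automatic since the inner sum is otherwise empty.

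For part (2), apply Theorem \ref{CM2017maindef} to $\bar{\bold{s}}$: the $r'$ leading ones contribute $u_{1,\,0} = 1$, leaving the $\bold{j}$-sum still indexed by $\mathfrak{J}_{\bold{s}}$. In the FCMPL $\mathrm{Li}_{\mathcal{A}_k,\,\bar{\bold{s}}}(1,\dots,1,u_{s_1,\,j_1},\dots,u_{s_r,\,j_r})_P$, split the summation by fixing the $(r'+1)$-st index $d := i_{r'+1}$; the sum over the last $r-1$ indices combines with the factor $u_{s_1,\,j_1}^{q^d}/L_d^{s_1}$ into the per-$d$ identity from the previous paragraph, namely $BC_{q^d-1}^{\bold{s},\,\bold{j}}/(L_d\, BC_{q^d-1})$. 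Relabeling the first $r'$ indices $i_1 > \cdots > i_{r'}$ as $d_0 > \cdots > d_{r'-1}$ and setting $d_{r'} := d$ yields \eqref{eqFMZVandMPBCN2}, with the constraint $d_{r'} \geq r-1$ appearing naturally from the non-vanishing of the inner part (1) sum. The main obstacle throughout is purely notational bookkeeping of the nested multi-indices; the mathematical substance reduces to the single line where Stirling--Carlitz numbers collapse diagonally at $q^d - 1$.
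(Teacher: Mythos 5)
Your argument is correct and follows essentially the same route the paper takes for the alternating generalization (Theorem \ref{FAMZVandMPBCN}), of which this statement is the specialization $\boldsymbol{\gamma}=(1,\dots,1)$: namely Chang--Mishiba's Theorem \ref{CM2017maindef} combined with the diagonal collapse of Stirling--Carlitz numbers at $n=q^{d}-1$ (equation \eqref{MOBCNcorollary}) and the identity $BC_{q^d-1}/\Gamma_{q^d}=1/L_d$. The only cosmetic difference is in part (2), where the paper reaches the nested sum by iterating the recursion of Lemma \ref{MPBCNrecursion} $r^\prime$ times rather than by splitting the FCMPL summation indices directly as you do, but the two manipulations are the same computation.
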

This is an analogue of Theorem \ref{thmImatomi}.

\subsection{Alternating multiple poly-Bernoulli-Carlitz numbers}\label{palternatingsection}

The purpose of this section is to extend the results in \cite{Harada2018} explained in \S \ref{Review on Harada's MPBCNs} to the alternating case.

\begin{definition}\label{DefMPBCN}
For $\bold{s}=(s_1,\dots,s_r) \in \mathbb{N}^r$, tuples $\boldsymbol{\gamma}=(\gamma_1,\,\dots,\,\gamma_r) \in (\overline{\mathbb{F}}_q^\times)^r$ of invertible elements of the algebraic closure of $\mathbb{F}_q$ and $\bold{j}=(j_1,\,\dots,\,j_r) \in\mathfrak{J}_{\bold{s}}$, the \textit{alternating multiple poly-Bernoulli-Carlitz numbers} (\textit{AMPBCNs} for short) $BC_n^{\bold{s},\,\boldsymbol{\gamma},\,\bold{j}}$ $ \in\bar{k}$ are defined by
\begin{equation*}
\sum_{n\geq0}BC_n^{\bold{s},\,\boldsymbol{\gamma},\,\bold{j}}\frac{z^n}{\Pi(n)}=\frac{\mathrm{Li}_{\bold{s}}(e_C(z)\gamma_1 u_{s_1,\,j_1},\,\gamma_2 u_{s_2,\,j_2},\dots,\,\gamma_r u_{s_r,\,j_r})}{e_C(z)}.
\end{equation*}
\end{definition}

This is an alternating extension of Definition \ref{DefMPBCNoriginal}.

\subsubsection{Connection with Stirling-Carlitz numbers}
We describe the above numbers as finite sums in terms of Stirling-Carlitz numbers, which could be regarded as an alternating extension of Theorem \ref{thmMPBCNandStirlingoriginal} and as an analogue of Theorem \ref{alBernoulliandStirling}.

\begin{theorem}\label{thmMPBCNandStirling}
If $\bold{s}$, $\boldsymbol{\gamma}$ and $\bold{j}$ are as in the Definition \ref{DefMPBCN}, the following equality holds:
\begin{equation*}
BC_n^{\bold{s},\,\boldsymbol{\gamma},\,\bold{j}}=\sum_{\log_q(n+1)\geq d_1>\cdots>d_r\geq0}\Gamma_{q^{d_1}}
\left\{
    \begin{array}{c}
     n\\
     q^{d_1}-1
    \end{array}
  \right\}_C \frac{(\gamma_1u_{s_1,j_1})^{q^{d_1}}\cdots(\gamma_r u_{s_r,j_r})^{q^{d_r}}}{L_{d_1}^{s_1}\cdots L_{d_r}^{s_r}}.
\end{equation*}
\end{theorem}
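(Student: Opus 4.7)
The plan is to follow the blueprint of the proof of Theorem \ref{alBernoulliandStirling} (the characteristic $0$ case) and of Theorem \ref{thmMPBCNandStirlingoriginal} (Harada's non-alternating characteristic $p$ case). The only new feature is the presence of the parameters $\gamma_i \in \overline{\mathbb{F}}_q^\times$, and they intervene only as extra multiplicative constants attached to the $u_{s_i,j_i}$, so they slot in cleanly.

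First I would expand the Carlitz multiple polylogarithm directly from its series definition, which gives
\begin{align*}
\frac{\mathrm{Li}_{\bold{s}}(e_C(z)\gamma_1 u_{s_1,j_1},\gamma_2 u_{s_2,j_2},\dots,\gamma_r u_{s_r,j_r})}{e_C(z)}
=\!\! \sum_{d_1>\cdots>d_r\geq 0}\!\! \frac{e_C(z)^{q^{d_1}-1}(\gamma_1 u_{s_1,j_1})^{q^{d_1}}\cdots (\gamma_r u_{s_r,j_r})^{q^{d_r}}}{L_{d_1}^{s_1}\cdots L_{d_r}^{s_r}}.
\end{align*}
So the task reduces to expanding $e_C(z)^{q^{d_1}-1}$ as a power series in $z/\Pi(n)$. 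This is precisely what the defining series of the Stirling-Carlitz numbers does: taking $m = q^{d_1}-1$ in the formula $e_C(z)^m/\Pi(m) = \sum_{n\geq 0}\left\{\begin{smallmatrix} n\\ m\end{smallmatrix}\right\}_C z^n/\Pi(n)$ and using the identity $\Pi(q^{d_1}-1) = \Gamma_{q^{d_1}}$ (immediate from the $q$-adic expansion $q^{d_1}-1 = \sum_{j=0}^{d_1-1}(q-1)q^j$ together with the definitions of $\Pi$ and $\Gamma_{n+1}$), one gets
\begin{equation*}
e_C(z)^{q^{d_1}-1} = \Gamma_{q^{d_1}} \sum_{n\geq 0}\left\{\begin{array}{c} n\\ q^{d_1}-1\end{array}\right\}_C \frac{z^n}{\Pi(n)}.
\end{equation*}

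I would then substitute this back, interchange summations, and compare the coefficient of $z^n/\Pi(n)$ on both sides of the defining identity of $BC_n^{\bold{s},\boldsymbol{\gamma},\bold{j}}$. The last step is to truncate the range of $d_1$: the vanishing property in \eqref{Stirling2} forces $\left\{\begin{smallmatrix} n\\ q^{d_1}-1\end{smallmatrix}\right\}_C = 0$ whenever $q^{d_1}-1 > n$, i.e.\ whenever $d_1 > \log_q(n+1)$, so the sum collapses to the claimed range $\log_q(n+1)\geq d_1 > \cdots > d_r \geq 0$.

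There is no real obstacle here: the argument is a definition-unfolding accompanied by one coefficient comparison. The only points to watch are the bookkeeping identity $\Pi(q^{d_1}-1) = \Gamma_{q^{d_1}}$ and the use of the vanishing from \eqref{Stirling2} to produce the summation bound $\log_q(n+1)\geq d_1$; the $\gamma_i$'s ride along as scalars and require no extra argument.
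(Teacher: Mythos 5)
Your proposal is correct and follows essentially the same route as the paper: expand the Carlitz multiple polylogarithm termwise, rewrite $e_C(z)^{q^{d_1}-1}$ via the defining series of the Stirling--Carlitz numbers, interchange sums, truncate the $d_1$-range using the vanishing in \eqref{Stirling2}, and compare coefficients of $z^n/\Pi(n)$. (The identity $\Pi(q^{d_1}-1)=\Gamma_{q^{d_1}}$ you single out is in fact immediate from the paper's convention $\Gamma_{n+1}:=\Pi(n)$, so no $q$-adic bookkeeping is even needed there.)
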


\begin{proof}
We have
\begin{align*}
&\frac{\mathrm{Li}_{\bold{s}}(e_C(z)\gamma_1 u_{s_1,\,j_1},\,\gamma_2 u_{s_2,\,j_2},\dots,\,\gamma_r u_{s_r,\,j_r})}{e_C(z)}\\
=&\sum_{d_1>\cdots>d_r\geq0}e_C(z)^{q^{d_1}-1}\frac{(\gamma_1u_{s_1,j_1})^{q^{d_1}}\cdots(\gamma_r u_{s_r,j_r})^{q^{d_r}}}{L_{d_1}^{s_1}\cdots L_{d_r}^{s_r}}\\
=&\sum_{d_1>\cdots>d_r\geq0} \left( \sum_{n\geq0}\Gamma_{q^{d_1}}
\left\{
    \begin{array}{c}
     n\\
     q^{d_1}-1
    \end{array}
  \right\}_C
\frac{z^n}{\Pi(n)}
\frac{(\gamma_1u_{s_1,j_1})^{q^{d_1}}\cdots(\gamma_r u_{s_r,j_r})^{q^{d_r}}}{L_{d_1}^{s_1}\cdots L_{d_r}^{s_r}}\right)\\
=& \sum_{n\geq0}\left( \sum_{d_1>\cdots>d_r\geq0}\Gamma_{q^{d_1}}
\left\{
    \begin{array}{c}
     n\\
     q^{d_1}-1
    \end{array}
  \right\}_C
\frac{(\gamma_1u_{s_1,j_1})^{q^{d_1}}\cdots(\gamma_r u_{s_r,j_r})^{q^{d_r}}}{L_{d_1}^{s_1}\cdots L_{d_r}^{s_r}}\right)\frac{z^n}{\Pi(n)}\\
=& \sum_{n\geq0}\left( \sum_{\log_q(n+1)\geq d_1>\cdots>d_r\geq0}\Gamma_{q^{d_1}}
\left\{
    \begin{array}{c}
     n\\
     q^{d_1}-1
    \end{array}
  \right\}_C
\frac{(\gamma_1u_{s_1,j_1})^{q^{d_1}}\cdots(\gamma_r u_{s_r,j_r})^{q^{d_r}}}{L_{d_1}^{s_1}\cdots L_{d_r}^{s_r}}\right)\frac{z^n}{\Pi(n)};
\end{align*}
the second equality follows from the definition of Stirling-Carlitz numbers and the forth holds by the equality \eqref{Stirling2}. Then the comparing coefficients of $z^n$ for each $n$ results in the desired equalities.
\end{proof} 
Using Theorem \ref{thmMPBCNandStirling} and the equality \eqref{Stirling2}, we obtain the following:
\begin{equation}
BC_{q^m-1}^{\bold{s},\,\boldsymbol{\gamma},\,\bold{j}}=\Gamma_{q^{m}}\sum_{m> d_2>\cdots>d_r\geq0}\frac{(\gamma_1u_{s_1,j_1})^{q^{m}}\cdots(\gamma_r u_{s_r,j_r})^{q^{d_r}}}{L_{m}^{s_1}\cdots L_{d_r}^{s_r}}. \label{MOBCNcorollary}
\end{equation}
where $m \in \mathbb{N}$, which is a generalization of \cite[Corollary 28]{Harada2018}.

\subsubsection{Connection with finite alternating multiple zeta values}

\begin{definition}\label{FAMZV}
For $\textbf{s}=(s_1,\dots,s_r) \in \mathbb{Z}^r$ and $\boldsymbol{\epsilon}=(\epsilon_1,\,\dots,\epsilon_r) \in {(A^\times)}^r$, the \textit{alternating finite multiple zeta value} (\textit{AFMZV} for short) $\zeta_{\mathcal{A}_k}(\bold{s}\,;\boldsymbol{\epsilon})$ $=$ $(\zeta_{\mathcal{A}_k}(\textbf{s}\,;\boldsymbol{\epsilon})_P)_{P\in \operatorname{Spm}A}$ $\in$ $ \mathcal{A}_k$ is defined by
\begin{equation*}
\zeta_{\mathcal{A}_k}(\textbf{s}\,;\boldsymbol{\epsilon})_P :=\sum_{\substack{\deg P>\deg{a_1}>\cdots>\deg a_r\geq0\\a_i\text{ monic}}}\frac{{\epsilon_1}^{\operatorname{deg}a_1}{\epsilon_2}^{\operatorname{deg}a_2}\cdots{\epsilon_r}^{\operatorname{deg}a_r}}{a_1^{s_1}\,\cdots a_r^{s_r}} \in A/(P).
\end{equation*}
\end{definition}

It can be seen as a characteristic $p$ analogue of AFMZV.
It immediately follows from \cite[Theorem 2.6]{Harada2020} that the product of any two AFMZVs are $\mathbb{F}_q$-linear combination of AFMZVs.

In order to obtain an alternating extension of Theorem \ref{CM2017maindef}, we extend the domain of FCMPLs from $\mathcal{A}_k$ to the ring $\mathcal{A}_{k^\prime}$ defined as follows: Let $q^\prime$ be a power of $q$. We define $A^\prime$, $k^\prime$ and $\mathcal{A}_{k^\prime}$ by the same ways as those of $A$, $k$ and $\mathcal{A}_k$ but substituting $q$ by $q^\prime$, and regard $A$, $k$ as subrings of $A^\prime$, $k^\prime$ by canonical ways, respectively. 
For each element $(a_P)$ of $\prod_P(A/(P))$ and each irreducible monic polynomial $Q_1$ in $A^\prime$ above $P_1 \in \operatorname{Spm}A$, define $b_{Q_1}$ to be the image of $a_{P_1}$ under the canonical embedding $A/(P_1)\hookrightarrow A^\prime/(Q_1)$ induced by the inclusion $A\rightarrow A^\prime$. Then the ring homomorphism from $\prod_P(A/(P))$ to $\prod_{Q \in \operatorname{Spm}A^\prime}(A^\prime/(Q))$ which maps $(a_P)$ to $(b_Q)$ induces an embedding of $\mathcal{A}_k$ into $\mathcal{A}_{k^\prime}$.

The FCMPLs can be extended to the multivariable functions on $\mathcal{A}_{k^\prime}$; for any tuples $\textbf{s}=(s_1,s_2,\dots,s_r) \in \mathbb{N}^r$ and $\bold{b}=((b_{Q,\,1}),\,\dots,\,(b_{Q,\,r})) \in \mathcal{A}_{k^\prime}^r$ with $(b_{Q,\,i}) \in A^\prime/Q$, we define the value $\mathrm{Li}_{\mathcal{A}_{k^\prime},\,\bold{s}}(\bold{b})=(\mathrm{Li}_{\mathcal{A}_k,\,\bold{s}}(\bold{b})_Q)_{Q\in\operatorname{Spm}A^\prime}$ by
\begin{equation*}
\mathrm{Li}_{\mathcal{A}_k,\,\bold{s}}(\bold{b})_Q:=\sum_{\deg P>i_1>\cdots>i_r\geq0} \frac{b_{Q,\,1}^{q^{i_1}}\,\cdots \,b_{Q,\,r}^{q^{i_r}}}{L_{i_1}^{s_1}\,\cdots \, L_{i_r}^{s_r}} \in A^\prime/(Q),
\end{equation*}
where the symbol $P$ in the right hand side stands for the monic irreducible polynomial in $A$ which is divided by $Q$ in $A^\prime$.

So far we put $q^\prime:=q^{q-1}$. We note that, for any $\epsilon \in A^\times=\mathbb{F}_q^\times$, the set ${A^{\prime}}^\times$ contains all $(q-1)$-th roots of $\epsilon$.
\begin{theorem}\label{FAMZVandMCPL}
Let $q^\prime:=q^{q-1}$ and let $\textbf{s}$ and $\boldsymbol{\epsilon}$ be as in the Definition \ref{FAMZV} and $\gamma_1,\,\dots,\gamma_r \in {A^{\prime}}^\times$ be $(q-1)$-th roots of $\epsilon_1,\,\dots,\epsilon_r$, respectively. Then, the equality
\begin{equation}
\zeta_{\mathcal{A}_k}(\bold{s}\,;\,\boldsymbol{\epsilon})
=\frac{1}{\gamma_1\Gamma_{s_1}\cdots\gamma_r\Gamma_{s_r}}\sum_{\bold{j}\in\mathfrak{J}_\bold{s}}
\theta^{\bold{j}}\mathrm{Li}_{\mathcal{A}_k,\,\bold{s}}(\gamma_1 u_{s_1,\,j_1},\dots,\,\gamma_r u_{s_r,\,j_r})\label{eqFAMZVandMCPL}
\end{equation}
in $\mathcal{A}_{k}$ holds.
\end{theorem}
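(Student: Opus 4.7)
The plan is to reduce the identity to the Anderson-Thakur formula $H_{n-1}^{(d)}(t)|_{t=\theta} = L_d^n \Gamma_n S_d(n)$, much as in the proof of Theorem \ref{CM2017maindef}, after unravelling how the alternating weight $\epsilon_i^{\deg a_i}$ interacts with the chosen $(q-1)$-th roots $\gamma_i$.

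First I would decouple the inner sum on $a_i$ from the alternating factors. Writing $d_i := \deg a_i$ and noting that $\epsilon_i^{\deg a_i} = \epsilon_i^{d_i}$ is constant on each stratum of fixed degree, Definition \ref{FAMZV} yields
\begin{equation*}
\zeta_{\mathcal{A}_k}(\bold{s};\boldsymbol{\epsilon})_P \equiv \sum_{\deg P > d_1 > \cdots > d_r \geq 0} \epsilon_1^{d_1} \cdots \epsilon_r^{d_r}\, S_{d_1}(s_1) \cdots S_{d_r}(s_r) \pmod{P}.
\end{equation*}
Then, combining $S_{d_i}(s_i) = L_{d_i}^{-s_i} \Gamma_{s_i}^{-1} H_{s_i-1}^{(d_i)}(t)|_{t=\theta} = L_{d_i}^{-s_i} \Gamma_{s_i}^{-1} \sum_{j_i=0}^{m_{s_i}} u_{s_i,j_i}^{q^{d_i}} \theta^{j_i}$ for each factor and interchanging the order of summation, I obtain
\begin{equation*}
\zeta_{\mathcal{A}_k}(\bold{s};\boldsymbol{\epsilon})_P \equiv \frac{1}{\Gamma_{s_1} \cdots \Gamma_{s_r}} \sum_{\bold{j} \in \mathfrak{J}_\bold{s}} \theta^{\bold{j}} \sum_{\deg P > d_1 > \cdots > d_r \geq 0} \frac{\epsilon_1^{d_1} \cdots \epsilon_r^{d_r}\, u_{s_1,j_1}^{q^{d_1}} \cdots u_{s_r,j_r}^{q^{d_r}}}{L_{d_1}^{s_1} \cdots L_{d_r}^{s_r}} \pmod{P}.
\end{equation*}

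The next step is to recognize the inner sum as (a scalar multiple of) a value of the finite Carlitz multiple polylogarithm at the point $(\gamma_1 u_{s_1,j_1}, \dots, \gamma_r u_{s_r,j_r})$. The key observation is that, since $\epsilon_i \in \mathbb{F}_q^\times$ and $\gamma_i^{q-1} = \epsilon_i$, a short induction on $d$ gives the identity
\begin{equation*}
\gamma_i^{q^d} = \gamma_i \cdot \epsilon_i^d \qquad (d \geq 0);
\end{equation*}
the base case is $\gamma_i^q = \gamma_i \cdot \gamma_i^{q-1} = \gamma_i \epsilon_i$, and the step uses $\epsilon_i^q = \epsilon_i$. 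Substituting $\epsilon_i^{d_i} = \gamma_i^{-1} \gamma_i^{q^{d_i}}$ inside the inner sum pulls out a global factor of $(\gamma_1 \cdots \gamma_r)^{-1}$ and identifies the remainder with the $P$-component of $\mathrm{Li}_{\mathcal{A}_{k'},\bold{s}}(\gamma_1 u_{s_1,j_1}, \dots, \gamma_r u_{s_r,j_r})$, whose definition has the same cutoff $\deg P$.

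Assembling everything, I get the proposed equality in $\mathcal{A}_{k'}$; since the left hand side $\zeta_{\mathcal{A}_k}(\bold{s};\boldsymbol{\epsilon})$ a priori lies in the subring $\mathcal{A}_k \hookrightarrow \mathcal{A}_{k'}$, the equality descends to $\mathcal{A}_k$. The main obstacle I anticipate is bookkeeping: one must verify that the extended FCMPL on $\mathcal{A}_{k'}$ (whose series is indexed by $\deg P > i_1 > \cdots > i_r \geq 0$, with $P$ the prime of $A$ below $Q$) agrees, when evaluated at the $A'$-valued point $(\gamma_i u_{s_i,j_i})$, with the naive truncated series appearing in Step 2, so that substituting $\gamma_i^{q^{d_i}} = \gamma_i \epsilon_i^{d_i}$ legitimately converts one expression into the other independently at every prime.
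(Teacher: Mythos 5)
Your proposal is correct and follows essentially the same route as the paper's proof: strata-by-degree decoupling of the alternating weights, the Anderson--Thakur identity $H_{s-1}^{(d)}(\theta)=L_d^{s}\Gamma_{s}S_d(s)$ to convert power sums into the $u_{s,j}$-expansion, and the key relation $\gamma_i^{q^{d}}=\gamma_i\epsilon_i^{d}$ to absorb the signs into the FCMPL argument. The only point worth making explicit is that the division by $\Gamma_{s_i}$ requires $P\nmid\Gamma_{s_i}$, which excludes only finitely many $P$ and hence does not affect the identity in $\mathcal{A}_k$.
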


Though the elements $\gamma_1,\,\dots,\gamma_r$ are not in $\mathcal{A}_k$ but in $\mathcal{A}_{k^\prime}$, we see that the right hand side of the equality \eqref{eqFAMZVandMCPL} is in $\mathcal{A}_k$ as the left hand side is.

\begin{proof}
It is enough to show that the congruences
\begin{align*}
\zeta_{\mathcal{A}_k}(\bold{s}\,;\,\boldsymbol{\epsilon})_P
\equiv\frac{1}{\gamma_1\Gamma_{s_1}\cdots\gamma_r\Gamma_{s_r}}\sum_{\bold{j}\in\mathfrak{J}_\bold{s}}
\theta^{\bold{j}}\mathrm{Li}_{\mathcal{A}_k,\,\bold{s}}(\gamma_1 u_{s_1,\,j_1},&\dots,\,\gamma_r u_{s_r,\,j_r})_P \mod P
\end{align*}
in $A^\prime/(P)\simeq\prod_{Q|P} A^\prime/(Q)$ hold for all but finite irreducible polynomial $P$ in $A$. Let $P$ be an element of $\operatorname{Spm}A$ such that $P \not | \ \Gamma_{s_i}$ for all $i$. We have the equalities and congruences:
\begin{align*}
&\zeta_{\mathcal{A}_k}(\bold{s}\,;\,\boldsymbol{\epsilon})_P\\
=&\sum_{\deg P >d_1>\cdots>d_r \geq0}\epsilon_1^{d_1}S_{d_1}(s_1)\cdots \epsilon_2^{d_2} S_{d_r}(s_r)\\
\equiv&\frac{1}{\Gamma_{s_1}\cdots\Gamma_{s_r}}\sum_{\deg P >d_1>\cdots>d_r \geq0}\frac{\epsilon_1^{d_1}H_{s_1 -1}^{(d_1)}(\theta)\cdots\epsilon_r^{d_r}H_{s_r -1}^{(d_r)}(\theta)}{L_{d_1}^{s_1}\cdots L_{d_r}^{s_r}} \mod P\\
=&\frac{1}{\Gamma_{s_1}\cdots\Gamma_{s_r}}\sum_{\deg P >d_1>\cdots>d_r \geq0}\sum_{\bold{j}\in\mathfrak{J}_\bold{s}}\theta^{\bold{j}}\frac{\epsilon_1^{d_1}u_{s_1,j_1}^{q^{d_1}}\cdots\epsilon_r^{d_r} u_{s_r,j_r}^{q^{d_r}}}{L_{d_1}^{s_1}\cdots L_{d_r}^{s_r}}\\
\equiv&\frac{1}{\gamma_1\Gamma_{s_1}\cdots \gamma_r\Gamma_{s_r}}\sum_{\bold{j}\in\mathfrak{J}_\bold{s}}\theta^{\bold{j}}\sum_{\deg P >d_1>\cdots>d_r \geq0}\frac{(\gamma_1u_{s_1,j_1})^{q^{d_1}}\cdots(\gamma_r u_{s_r,j_r})^{q^{d_r}}}{L_{d_1}^{s_1}\cdots L_{d_r}^{s_r}} \mod P\\
=&\frac{1}{\gamma_1\Gamma_{s_1}\cdots \gamma_r\Gamma_{s_r}}\sum_{\bold{j}\in\mathfrak{J}_\bold{s}}\theta^{\bold{j}}\mathrm{Li}_{\mathcal{A}_k,\,\bold{s}}(\gamma_1 u_{s_1,\,j_1},\,\dots,\,\gamma_r u_{s_r,\,j_r})_P,
\end{align*}
where the second congruence is by equations $\gamma_{i}^{q^d}=\epsilon^i \gamma_i$ which holds for $1\leq i\leq r$ and $d\geq0$.
\end{proof}

The following lemma is an alternating extension of \cite[Lemma 31]{Harada2018}.
\begin{lemma}\label{MPBCNrecursion}If we take $\bold{s}$, $\boldsymbol{\gamma}$ and $\bold{j}$ as in the Definition \ref{DefMPBCN}, then the recursive formula 
\begin{equation*}
BC_{q^m-1}^{\bold{s},\,\boldsymbol{\gamma},\,\bold{j}}=BC_{q^m-1}^{s_1,\gamma_1,\,\,j_1}\sum_{d=r-2}^{m-1}\frac{1}{\Gamma_{q^d}}BC_{q^d-1}^{\bold{s}^*,\,\boldsymbol{\gamma}^*,\,\bold{j}^*}
\end{equation*}
holds for $m \in \mathbb{N}_{>0}$ where $\bold{s}^*:=(s_2,\,\dots,\,s_r)$, $\boldsymbol{\gamma}^*:=(\gamma_2,\,\dots\,\gamma_r)$ and  $\bold{j}^*:=(j_2,\,\dots,\,j_r)$.
\end{lemma}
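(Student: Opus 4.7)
The plan is to derive the recursion directly from the closed form \eqref{MOBCNcorollary} obtained as a corollary of Theorem \ref{thmMPBCNandStirling}, by isolating the summand depending on the first coordinate and recognizing the remainder as a lower-depth AMPBCN evaluated at $q^d-1$.

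First, I would write out the right-hand side. Using \eqref{MOBCNcorollary} in depth one with index $m$, and in depth $r-1$ with index $d$ (against the tuples $\bold{s}^*,\boldsymbol{\gamma}^*,\bold{j}^*$), we get
\begin{align*}
BC_{q^m-1}^{s_1,\gamma_1,j_1}&=\Gamma_{q^m}\frac{(\gamma_1 u_{s_1,j_1})^{q^m}}{L_m^{s_1}},\\
\frac{1}{\Gamma_{q^d}}BC_{q^d-1}^{\bold{s}^*,\boldsymbol{\gamma}^*,\bold{j}^*}&=\sum_{d>d_3>\cdots>d_r\geq0}\frac{(\gamma_2 u_{s_2,j_2})^{q^d}(\gamma_3 u_{s_3,j_3})^{q^{d_3}}\cdots(\gamma_r u_{s_r,j_r})^{q^{d_r}}}{L_d^{s_2}L_{d_3}^{s_3}\cdots L_{d_r}^{s_r}}.
\end{align*}

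Next, I would substitute these into the RHS of the claimed identity, relabel the outer index $d=d_2$, and observe that the constraint $d\geq r-2$ is automatic once we demand $d_2>d_3>\cdots>d_r\geq0$. Hence
\begin{align*}
BC_{q^m-1}^{s_1,\gamma_1,j_1}\sum_{d=r-2}^{m-1}\frac{1}{\Gamma_{q^d}}BC_{q^d-1}^{\bold{s}^*,\boldsymbol{\gamma}^*,\bold{j}^*}
=\Gamma_{q^m}\sum_{m>d_2>\cdots>d_r\geq0}\frac{(\gamma_1 u_{s_1,j_1})^{q^m}(\gamma_2 u_{s_2,j_2})^{q^{d_2}}\cdots(\gamma_r u_{s_r,j_r})^{q^{d_r}}}{L_m^{s_1}L_{d_2}^{s_2}\cdots L_{d_r}^{s_r}},
\end{align*}
which is exactly the expression for $BC_{q^m-1}^{\bold{s},\boldsymbol{\gamma},\bold{j}}$ given by \eqref{MOBCNcorollary} (with $d_1=m$).

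There is essentially no obstacle beyond bookkeeping: the only thing to verify carefully is that the lower bound $d=r-2$ in the stated sum is correct and that no strictly-decreasing tuple $(d_2,d_3,\ldots,d_r)$ with $d_r\geq0$ is missed or double-counted when reindexing. Since $d_2>d_3>\cdots>d_r\geq0$ forces $d_2\geq r-2$, and conversely every $d\in\{r-2,\ldots,m-1\}$ admits at least the tuples allowed by \eqref{MOBCNcorollary} applied to the shifted data, the two expressions match termwise.
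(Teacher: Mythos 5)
Your proposal is correct and is essentially the paper's own argument: both proofs rest entirely on the closed form \eqref{MOBCNcorollary}, factoring out the depth-one piece $\Gamma_{q^m}(\gamma_1 u_{s_1,j_1})^{q^m}/L_m^{s_1}$ and regrouping the remaining sum over $m>d_2>\cdots>d_r\geq 0$ according to the value of $d_2$. The only difference is direction (you expand the right-hand side to recover the left, the paper decomposes the left-hand side), and your remark that the lower bound $d\geq r-2$ is forced by the strictly decreasing tuple is the correct justification for the range of summation.
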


\begin{proof}
The assertion is obtained as follows:

\begin{align*}
BC_{q^m-1}^{\bold{s},\,\boldsymbol{\gamma},\,\bold{j}}&=\sum_{m> d_2>\cdots>d_r\geq0}\Pi(q^{m}-1)\frac{(\gamma_1u_{s_1,j_1})^{q^{m}}\cdots(\gamma_r u_{s_r,j_r})^{q^{d_r}}}{L_{m}^{s_1}\cdots L_{d_r}^{s_r}}\\
&=\Pi(q^m-1)\frac{(\gamma_1 u_{s_1,\,j_1})^{q^m}}{L_m^{s_1}} \sum_{m> d_2>\cdots>d_r\geq0}\frac{(\gamma_2 u_{s_2,\,j_2})^{q^{d_2}}\cdots(\gamma_r u_{s_r,j_r})^{q^{d_r}}}{L_{d_2}^{s_2}\cdots L_{d_r}^{s_r}}\\
&=BC_{q^m-1}^{s_1,\gamma_1,\,\,j_1}\sum_{m> d_2>\cdots>d_r\geq0}\frac{(\gamma_2 u_{s_2,\,j_2})^{q^{d_2}}\cdots(\gamma_r u_{s_r,j_r})^{q^{d_r}}}{L_{d_2}^{s_2}\cdots L_{d_r}^{s_r}}\\
&=BC_{q^m-1}^{s_1,\gamma_1,\,\,j_1} \sum_{d_2=r-2}^{m-1}\frac{1}{\Gamma_{q^{d_2}}}\sum_{d_2>d_3>\cdots>d_r\geq0}\Gamma_{q^{d_2}}\frac{(\gamma_2 u_{s_2,\,j_2})^{q^{d_2}}\cdots(\gamma_r u_{s_r,j_r})^{q^{d_r}}}{L_{d_2}^{s_2}L_{d_3}^{s_3}\cdots L_{d_r}^{s_r}}\\
&=BC_{q^m-1}^{s_1,\gamma_1,\,\,j_1}\sum_{d=r-2}^{m-1}\frac{1}{\Gamma_{q^d}}BC_{q^d-1}^{\bold{s}^*,\,\boldsymbol{\gamma}^*,\,\bold{j}^*};
\end{align*}
where the third and the fifth equalities are due to the equality \eqref{MOBCNcorollary}.
\end{proof}

The following could be seen as  an alternating extension of Theorem \ref{FMZVandMPBCN} and also seen as an analogue of Theorem \ref{0result}.
\begin{theorem}\label{FAMZVandMPBCN}We put $q^\prime :=q^{q-1}$. The followings hold.
\begin{enumerate}
\item
If we take $\textbf{s}$ and $\boldsymbol{\epsilon}$ be as in the Definition \ref{FAMZV}
and any $(q-1)$-th roots $\gamma_1,\,\dots,\gamma_r \in \mathbb{F}_{q^\prime}$ of $\epsilon_1,\dots,\epsilon_r$, respectively, then the congruences
\begin{equation}
\zeta_{\mathcal{A}_k}(\bold{s}\,;\,\boldsymbol{\epsilon})_P
\equiv \frac{1}{\gamma_1\Gamma_{s_1}\cdots\gamma_r\Gamma_{s_r}}\sum_{\bold{j}\in\mathfrak{J}_\bold{s}}
\theta^{\bold{j}}  \sum_{d=r-1}^{\operatorname{deg}P-1}\frac{BC_{q^d-1}^{\bold{s},\,\boldsymbol{\gamma},\,\bold{j}}}{L_d BC_{q^d-1}}  \label{eqFAMZVandMPBCN}
\end{equation}
in the residue ring $A^\prime/(P)$ hold for all $P \in \operatorname{Spm}A$ such that $P\not| \  \Gamma_{s_i}$ for $1\leq i\leq r$.
\item
For ${r^\prime}\in\mathbb{N}$, we put $\bar{\bold{s}}=(1,\,\dots,1,\,s_1,\,\dots,\,s_r) \in \mathbb{N}^{r+{r^\prime}}$ and $\bar{\boldsymbol{\epsilon}}=$ $(1,\,$$\dots,\,1,$$\,\epsilon_1,$$\,\dots$$ ,$ $\epsilon_r) \in (\mathbb{F}_q^\times)^{r+{r^\prime}}$. Then the congruences 
\begin{equation*}
\zeta_{\mathcal{A}_k}(\bar{\bold{s}}\,;\,\bar{\boldsymbol{\epsilon}})_P
\equiv \frac{1}{\gamma_1\Gamma_{s_1}\cdots\gamma_r\Gamma_{s_r}}\sum_{\bold{j}\in\mathfrak{J}_\bold{s}}
\theta^{\bold{j}}  \sum_{\deg P > d_0>\cdots>d_{r^\prime}\geq r-1}\frac{BC_{q^{d_{r^\prime}}-1}^{\bold{s},\,\boldsymbol{\gamma},\,\bold{j}}}{L_{d_0}\cdots L_{d_{r^\prime}} BC_{q^{d_{r^\prime}}-1}}  \label{eqFAMZVandMPBCN}
\end{equation*}
in $A^\prime/(P)$ hold for all $P \in \operatorname{Spm}A$ such that $P\not| \ \Gamma_{s_i}$ for $1\leq i\leq r$.
\end{enumerate}
\end{theorem}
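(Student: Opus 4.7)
My plan is to deduce both parts of Theorem \ref{FAMZVandMPBCN} by combining Theorem \ref{FAMZVandMCPL}, which already expresses AFMZVs in terms of FCMPLs, with the closed-form identity \eqref{MOBCNcorollary} for $BC_{q^m - 1}^{\bold{s}, \boldsymbol{\gamma}, \bold{j}}$. Throughout I use the auxiliary fact $\Gamma_{q^d} = L_d \, BC_{q^d - 1}$, which follows by specializing \eqref{MOBCNcorollary} to $r = s_1 = 1$, $\gamma_1 = 1$, $j_1 = 0$ (recalling $u_{1, 0} = 1$ since $H_0 = 1$) and reproduces the classical Bernoulli--Carlitz evaluation at index $q^d - 1$.

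For part (1), I start from Theorem \ref{FAMZVandMCPL} and expand $\mathrm{Li}_{\mathcal{A}_k, \bold{s}}(\gamma_1 u_{s_1, j_1}, \dots, \gamma_r u_{s_r, j_r})_P$ by its defining series. Grouping terms by the largest summation index $d$ (which must satisfy $r - 1 \leq d \leq \deg P - 1$), the contribution at $d$ factors as $(\gamma_1 u_{s_1, j_1})^{q^d} / L_d^{s_1}$ times the inner sum over $d > d_2 > \cdots > d_r \geq 0$; by \eqref{MOBCNcorollary} this product equals $BC_{q^d - 1}^{\bold{s}, \boldsymbol{\gamma}, \bold{j}} / \Gamma_{q^d}$. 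Replacing $\Gamma_{q^d}$ by $L_d BC_{q^d - 1}$ then yields \eqref{eqFAMZVandMPBCN}.

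For part (2), I apply Theorem \ref{FAMZVandMCPL} directly to $\bar{\bold{s}}$ and $\bar{\boldsymbol{\epsilon}}$, choosing $1$ as the $(q - 1)$-th root of each leading $\epsilon = 1$. The identities $\Gamma_1 = 1$ and $u_{1, 0} = 1$ collapse the $r'$ leading normalizing factors and force the first $r'$ arguments of the resulting FCMPL to be $1$; moreover $\mathfrak{J}_{\bar{\bold{s}}}$ requires the initial $r'$ components of $\bar{\bold{j}}$ to vanish, so $\sum_{\bar{\bold{j}}} \theta^{\bar{\bold{j}}}$ reduces to $\sum_{\bold{j} \in \mathfrak{J}_{\bold{s}}} \theta^{\bold{j}}$. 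Expanding the resulting FCMPL, whose summation runs over $\deg P > d_0 > d_1 > \cdots > d_{r' - 1} > d_{r'} > \delta_2 > \cdots > \delta_r \geq 0$ with exponents $(1, \dots, 1, s_1, \dots, s_r)$, and grouping by the cut index $d_{r'}$, the inner sum over $d_{r'} > \delta_2 > \cdots > \delta_r \geq 0$ combined with $(\gamma_1 u_{s_1, j_1})^{q^{d_{r'}}} / L_{d_{r'}}^{s_1}$ is identified via \eqref{MOBCNcorollary} with $BC_{q^{d_{r'}} - 1}^{\bold{s}, \boldsymbol{\gamma}, \bold{j}} / \Gamma_{q^{d_{r'}}}$; invoking $\Gamma_{q^{d_{r'}}} = L_{d_{r'}} BC_{q^{d_{r'}} - 1}$ then produces exactly the outer sum displayed in the statement. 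The main obstacle is purely combinatorial bookkeeping: checking that the lower bound $d_{r'} \geq r - 1$ is forced by the need to fit a strictly decreasing chain of $r - 1$ non-negative indices below $d_{r'}$, and that the simplifications at the leading positions ($\Gamma_1 = 1$, $u_{1, 0} = 1$, and the forced zero components of $\bar{\bold{j}}$) are carried through without slip.
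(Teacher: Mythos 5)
Your proof of part (1) is correct and is essentially the paper's argument: expand the right-hand side of Theorem \ref{FAMZVandMCPL} as the defining series of the FCMPL, group by the top index $d=d_1$, recognize the inner sum as $BC_{q^d-1}^{\bold{s},\,\boldsymbol{\gamma},\,\bold{j}}/\Gamma_{q^d}$ via \eqref{MOBCNcorollary}, and convert $\Gamma_{q^d}$ to $L_d\,BC_{q^d-1}$. For part (2) you take a genuinely different route. The paper first applies part (1) to $(\bar{\bold{s}},\bar{\boldsymbol{\epsilon}})$, which produces a single outer sum over $d_0$ involving $BC_{q^{d_0}-1}^{\bar{\bold{s}},\,\bar{\boldsymbol{\gamma}},\,\bar{\bold{j}}}/\Pi(q^{d_0}-1)$, and then unfolds that quantity into the nested sum over $d_0>\cdots>d_{r^\prime}$ by iterating the recursion of Lemma \ref{MPBCNrecursion}, peeling off one leading index $1$ at a time via $BC^{1,1,0}_{q^{d_i}-1}/\Gamma_{q^{d_i}}=1/L_{d_i}$. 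You instead bypass Lemma \ref{MPBCNrecursion} entirely: you expand the full FCMPL for $\bar{\bold{s}}$ as a nested sum over $r+r^\prime$ indices, observe that the leading $r^\prime$ positions contribute bare factors $1/L_{d_i}$ (since $\Gamma_1=1$, $u_{1,0}=1$, and the corresponding components of $\bar{\bold{j}}$ are forced to be $0$ because $\deg_t H_0=0$), and then collapse the tail over $d_{r^\prime}>\delta_2>\cdots>\delta_r\geq0$ into $BC_{q^{d_{r^\prime}}-1}^{\bold{s},\,\boldsymbol{\gamma},\,\bold{j}}/\Gamma_{q^{d_{r^\prime}}}$ by a single application of \eqref{MOBCNcorollary}. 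Both arguments are valid; yours is more direct and makes the lower bound $d_{r^\prime}\geq r-1$ transparent (terms with $d_{r^\prime}<r-1$ vanish because the inner chain cannot fit), while the paper's factorization through Lemma \ref{MPBCNrecursion} isolates a reusable recursive structure of the AMPBCNs. Your justification of the auxiliary identity $\Gamma_{q^d}=L_d\,BC_{q^d-1}$ by specializing \eqref{MOBCNcorollary} to depth one is the same device the paper uses.
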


\begin{proof}
 We have
\begin{align*}
&\gamma_1\Gamma_{s_1}\cdots\gamma_r\Gamma_{s_r}\zeta_{\mathcal{A}_k}(\bold{s}\,;\,\boldsymbol{\epsilon})_P\\
=&\sum_{\bold{j}\in\mathfrak{J}_\bold{s}} \theta^{\bold{j}}\mathrm{Li}_{\mathcal{A}_k,\,\bold{s}}(\gamma_1 u_{s_1,\,j_1},\dots,\,\gamma_r u_{s_r,\,j_r})_P\\
=&\sum_{\bold{j}\in\mathfrak{J}_\bold{s}} \theta^{\bold{j}}\sum_{\operatorname{deg}P> d_1>\cdots>d_r\geq0}\frac{(\gamma_1u_{s_1,j_1})^{q^{d_1}}\cdots(\gamma_r u_{s_r,j_r})^{q^{d_r}}}{L_{d_1}^{s_1}\cdots L_{d_r}^{s_r}}\\
=&\sum_{\bold{j}\in\mathfrak{J}_\bold{s}} \theta^{\bold{j}} \sum_{d=r-1}^{\deg P -1}\frac{1}{\Gamma_{q^d}}\sum_{\operatorname{deg}P> d>\cdots>d_r\geq0}\Gamma_{q^d}\frac{(\gamma_1u_{s_1,j_1})^{q^{d}}\cdots(\gamma_r u_{s_r,j_r})^{q^{d_r}}}{L_{d}^{s_1}\cdots L_{d_r}^{s_r}}\\
=&\sum_{\bold{j}\in\mathfrak{J}_\bold{s}} \theta^{\bold{j}} \sum_{d=r-1}^{\deg P -1}\frac{1}{\Gamma_{q^d}}BC_{q^d-1}^{\bold{s},\,\boldsymbol{\gamma},\,\bold{j}}
=\sum_{\bold{j}\in\mathfrak{J}_\bold{s}}
\theta^{\bold{j}}  \sum_{d=r-1}^{\operatorname{deg}P-1}\frac{BC_{q^d-1}^{\bold{s},\,\boldsymbol{\gamma},\,\bold{j}}}{L_d BC_{q^d-1}},
\end{align*}
for such a $P$, hence we obtain the first assertion. The last equality is from the following equation
\begin{equation*}
\frac{BC_{q^d-1}}{\Gamma_{q^d}}=\frac{1}{L_d}
\end{equation*}
which holds for each $d\geq0$; this is from the equality \eqref{MOBCNcorollary}.

To show the second assertion, take $P$ such that $P \not| \ \Gamma_{s_i}$ for all $i$. If $\bar{\boldsymbol{\gamma}}$ stands for the tuple $(1,\,\dots,1,\,\gamma_1,\dots,\gamma_r) \in  ({\overline{\mathbb{F}}_q^\prime}^\times)^{r+{r^\prime}}$, the assertion (1) of Theorem \ref{FAMZVandMPBCN} yields the equality
\begin{align*}
\zeta_{\mathcal{A}_k}(\bar{\bold{s}}\,;\,\bar{\boldsymbol{\epsilon}})_P
&=\frac{1}{\Gamma_1^{r^\prime} \gamma_1\Gamma_{s_1}\cdots\gamma_r\Gamma_{s_r}}\sum_{\bold{j}\in\mathfrak{J}_{\bar{\bold{s}}}}
\theta^{\bold{j}}  \sum_{d_0=r-1}^{\operatorname{deg}P-1}\frac{BC_{q^{d_0}-1}^{\bar{\bold{s}},\,\bar{\boldsymbol{\gamma}},\,\bar{\bold{j}}}}{L_{d_0} BC_{q^{d_0}-1}}\\
&=\frac{1}{\gamma_1\Gamma_{s_1}\cdots\gamma_r\Gamma_{s_r}}\sum_{\bold{j}\in\mathfrak{J}_{\bar{\bold{s}}}}
\theta^{\bold{j}}  \sum_{d_0=r-1}^{\operatorname{deg}P-1}\frac{BC_{q^{d_0}-1}^{\bar{\bold{s}},\,\bar{\boldsymbol{\gamma}},\,\bar{\bold{j}}}}{\Pi(q^{d_0}-1)}\\
&=\frac{1}{\gamma_1\Gamma_{s_1}\cdots\gamma_r\Gamma_{s_r}}\sum_{\bold{j}\in\mathfrak{J}_{{\bold{s}}}}
\theta^{\bold{j}}  \sum_{d_0=r-1}^{\operatorname{deg}P-1}\frac{BC_{q^{d_0}-1}^{{\bold{s}},\,{\boldsymbol{\gamma}},\,{\bold{j}}}}{\Pi(q^{d_0}-1)}.
\end{align*}
since $H_0=1$.
By applying the Lemma \ref{MPBCNrecursion} repeatedly, we can calculate as follows for $d_0 \geq r^\prime +r-1$:
\begin{align*}
\frac{BC_{q^{d_0}-1}^{\bar{\bold{s}},\,\bar{\boldsymbol{\gamma}},\,\bar{\bold{j}}}}{\Pi(q^{d_0}-1)}
&=\frac{BC^{1,1,0}_{q^{d_0}-1}}{\Gamma_{q^{d_0}}} \sum_{d_1=r+r^\prime-2}^{d_0-1}\frac{BC^{\bar{\bold{s}}^*,\,\bar{\boldsymbol{\gamma}}^*,\,\bar{\bold{j}}^*}_{q^{d_1}-1}}{\Gamma_{q^{d_1}}}\\
&=\frac{BC^{1,1,0}_{q^{d_0}-1}}{\Gamma_{q^{d_0}}} \sum_{d_1=r+r^\prime-2}^{d_0-1}\frac{BC^{1,1,0}_{q^{d_1}-1}}{\Gamma_{q^{d_1}}} \sum_{d_2=r+r^\prime-2}^{d_1-1}\frac{BC^{\bar{\bold{s}}^{**},\,\bar{\boldsymbol{\gamma}}^{**},\,\bar{\bold{j}}^{**}}_{q^{d_2}-1}}{\Gamma_{q^{d_2}}}\\
&\vdots \\
&=\sum_{d_0>\cdots>d_{r^\prime}>r-1}\left(\prod_{i=0}^{r^\prime} \frac{BC_{q^{d_i}-1}^{1,1,0}}{\Gamma_{q^{d_i}}} \right)\frac{BC_{q^{d_{r^\prime}}-1}^{\bold{s},\,\boldsymbol{\gamma},\,\bold{j}}}{ BC_{q^{d_{r^\prime}}-1}}\\
&=\sum_{d_0>\cdots>d_{r^\prime}>r-1}\left(\prod_{i=0}^{r^\prime} \frac{1}{L_{d_i}} \right)\frac{BC_{q^{d_{r^\prime}}-1}^{\bold{s},\,\boldsymbol{\gamma},\,\bold{j}}}  { BC_{q^{d_{r^\prime}}-1}}.\\
\end{align*}
Hence we obtain the desired equality.
\end{proof}

\subsubsection*{Acknowledgment}

The author is deeply grateful to Professor H. Furusho; without his profound instruction and continuous encouragements the present paper would never be accomplished. He is also grateful to R. Harada who guided him to the research on positive characteristic arithmetic.

\appendix
\section{Finite multiple zeta values with non-all-positive indices} \label{Finite multiple zeta values with non-all-positive indices}

In the characteristic $0$ case, it is known that any FMZV with integer index is expressed as $\mathbb{Q}$-linear combination of FMZV's with all-positive indices (cf \cite{Kaneko2019}). Here, we show that the same is true in the case of characteristic $p$ (Theorem \ref{Appthm}).

We recall that the sum 
$
\sum_{a} \frac{1}{a^s}
$
(where $a$ runs through all monic polynomials of degree $d$ in $A$) is denoted by $S_d(s)$ (cf. \S\ref{Review on Harada's MPBCNs}). The following is a special case of \cite[Proposition 4.1]{Goss1979}:

\begin{prop}\label{Appprop}
For $s\in\mathbb{N}_{\geq0}$, there is $N(s) \in \mathbb{N}$ such that $S_d(-s)=0$ for any $d\geq N(s)$.
\end{prop}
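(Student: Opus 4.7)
The plan is to compute $S_d(-s)$ directly by parametrizing monic polynomials of degree $d$ by their lower coefficients in $\mathbb{F}_q$, applying the multinomial theorem, and then exploiting the classical character sum identity over $\mathbb{F}_q$ to force cancellation once $d$ is large relative to $s$.

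First, I would write each monic $a \in A$ of degree $d$ as $a = \theta^d + \sum_{j=0}^{d-1} c_j \theta^j$ with $c_j \in \mathbb{F}_q$, and expand
\[
a^s = \sum_{\substack{i_0,\ldots,i_d \geq 0 \\ i_0+\cdots+i_d = s}} \binom{s}{i_0,\ldots,i_d}\,\theta^{d i_d + \sum_{j=0}^{d-1} j i_j}\,\prod_{j=0}^{d-1} c_j^{i_j}
\]
by the multinomial theorem. Summing over the $q^d$ choices of $(c_0,\ldots,c_{d-1}) \in \mathbb{F}_q^d$ and interchanging the order of summation yields
\[
S_d(-s) = \sum_{i_0+\cdots+i_d = s} \binom{s}{i_0,\ldots,i_d}\,\theta^{d i_d + \sum_{j=0}^{d-1} j i_j}\,\prod_{j=0}^{d-1}\sigma(i_j),
\]
where $\sigma(i) := \sum_{c \in \mathbb{F}_q} c^i \in \mathbb{F}_q$.

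Next I would invoke the standard identities: $\sigma(i) = -1$ whenever $i \geq 1$ and $(q-1) \mid i$, and $\sigma(i) = 0$ otherwise. The crucial point is that $\sigma(0) = q = 0$ in $\mathbb{F}_q$, since $q$ is a power of the characteristic $p$. Consequently a multinomial term contributes nontrivially only when $i_j \geq q-1$ for every $j = 0, 1, \ldots, d-1$, which forces
\[
s = \sum_{j=0}^{d} i_j \;\geq\; \sum_{j=0}^{d-1} i_j \;\geq\; d(q-1).
\]
Hence whenever $d > s/(q-1)$ every term in the expansion vanishes, and the choice $N(s) := \lfloor s/(q-1) \rfloor + 1$ satisfies the required property.

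The only delicate point is the $\sigma(0)=0$ case of the character sum, which is what supplies the linear bound $d(q-1) \leq s$; if $\sigma(0)$ were nonzero the indices $i_j$ could vanish freely and no such $N(s)$ would exist. Everything else is formal bookkeeping with finite sums in characteristic $p$, so I do not anticipate any further obstacles.
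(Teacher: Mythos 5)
Your proof is correct, but it takes a genuinely different route from the paper's. The paper argues by induction on $s$: it writes each monic $a$ of degree $d$ as $\theta a' + b$ with $a'$ monic of degree $d-1$ and $b \in \mathbb{F}_q$, expands $(\theta a' + b)^s$ by the binomial theorem, kills every term with $t < s$ via the induction hypothesis applied to $S_{d-1}(-t)$, and kills the remaining $t = s$ term using only the single fact $\sum_{b \in \mathbb{F}_q} b^0 = q = 0$; this yields the recursive bound $N(s) = \max\{N(t)+1 \mid t < s\}$, i.e.\ effectively $N(s) = s+1$. You instead expand over all $d$ free coefficients at once and use the full character-sum evaluation $\sum_{c \in \mathbb{F}_q} c^i = -1$ if $i \geq 1$ and $(q-1) \mid i$, and $0$ otherwise (including $i=0$, where the sum is $q=0$). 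The payoff is that a surviving multinomial term forces $i_j \geq q-1$ for each of the $d$ lower coefficients, giving the explicit and sharper bound $N(s) = \lfloor s/(q-1)\rfloor + 1$, which improves on the paper's $s+1$ whenever $q > 2$ (and agrees with it for $q=2$). The paper's argument is shorter and needs less input; yours is self-contained, non-inductive, and quantitatively stronger. One small caveat: your closing remark that ``no such $N(s)$ would exist'' if $\sigma(0) \neq 0$ is rhetorical rather than a proven claim, but it plays no role in the logic.
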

\begin{proof}
If $s=0$, it is enough to put $N(s)=1$ since a number of elements of the set of all monic polynomials of degree $d$ is $q^d$ for $d\in\mathbb{N}$.

For general $s$, it is enough to out $N(s):=\max\{ N(t)+1 \mid t<s\}$.
Indeed, for $d\geq N(s)$ we have
\begin{align*}
S_d(-s)&=\sum_{\substack{\deg a =d-1\\a:\text{monic}\\b\in\mathbb{F}_q}}(\theta a+b)^s
=\sum_{\substack{\deg a =d-1\\a:\text{monic}\\b\in\mathbb{F}_q}} \sum_{t=0}^s (\theta a)^t b^{s-t}
\left(
    \begin{array}{c}
     s\\
     t
    \end{array}
  \right)\\ 
&=\theta^s\sum_{\substack{\deg a =d-1\\a:\text{monic}}} a^s \sum_{b\in\mathbb{F}_q}b^0=0.
\end{align*}
\end{proof}

We need the following lemma:
\begin{lemma}\label{Applemma}
For any tuple $(s_1,\,\dots,\,s_r)\in \mathbb{Z}^r,\,0\leq M\leq r$ and $N\in\mathbb{N}$, the element
\begin{equation*}
\left(\sum_{\deg P >d_1>\cdots >d_M\geq N>d_{M+1}>\cdots>d_r\geq0}S_{d_1}(s_1)\cdots S_{d_r}(s_r)\right)_P
\end{equation*}
of $\mathcal{A}_k$ is a $k$-linear combination of FMZVs with depth equal to or less than $r$.
\end{lemma}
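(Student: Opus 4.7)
The plan is to factor the sum in the lemma as an ``outer'' part indexed by $d_1,\ldots,d_M \geq N$ times a $P$-independent constant in $k$, and then to express the outer part as a $k$-linear combination of FMZVs of depth at most $M$ by induction on $M$.

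First, I would observe that for each $P\in\operatorname{Spm}A$ with $\deg P>N$, the sum in question factors as
\[
f_M(s_1,\ldots,s_M;N)_P\cdot C(s_{M+1},\ldots,s_r;N),
\]
where
\[
f_M(s_1,\ldots,s_M;N)_P:=\sum_{\deg P>d_1>\cdots>d_M\geq N}S_{d_1}(s_1)\cdots S_{d_M}(s_M)
\]
and
\[
C(s_{M+1},\ldots,s_r;N):=\sum_{N>d_{M+1}>\cdots>d_r\geq 0}S_{d_{M+1}}(s_{M+1})\cdots S_{d_r}(s_r)\in k.
\]
The finitely many primes $P$ with $\deg P\leq N$ contribute nothing to the class in $\mathcal{A}_k$, so it suffices to show that $(f_M(s_1,\ldots,s_M;N)_P)_P$ represents a $k$-linear combination of FMZVs of depth at most $M$ in $\mathcal{A}_k$.

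Next, I would induct on $M$. The base case $M=0$ is immediate, since $f_0=1$ is the empty-depth FMZV. For the inductive step, I split the depth-$M$ FMZV
\[
\zeta_{\mathcal{A}_k}(s_1,\ldots,s_M)_P=\sum_{\deg P>d_1>\cdots>d_M\geq 0}S_{d_1}(s_1)\cdots S_{d_M}(s_M)
\]
by counting how many of the strictly decreasing $d_j$'s lie in $[N,\deg P)$. Because $d_1>\cdots>d_M$, those $d_j$'s must occupy an initial segment of positions $1,\ldots,i$ for some $0\leq i\leq M$, yielding the identity
\[
\zeta_{\mathcal{A}_k}(s_1,\ldots,s_M)_P=\sum_{i=0}^{M}f_i(s_1,\ldots,s_i;N)_P\cdot C(s_{i+1},\ldots,s_M;N).
\]
Solving for the top term gives
\[
f_M=\zeta_{\mathcal{A}_k}(s_1,\ldots,s_M)_P-\sum_{i=0}^{M-1}f_i(s_1,\ldots,s_i;N)_P\cdot C(s_{i+1},\ldots,s_M;N),
\]
and the induction hypothesis on $f_0,\ldots,f_{M-1}$ shows $f_M$ is a $k$-linear combination of FMZVs of depth $\leq M$. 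Multiplying by the constant $C(s_{M+1},\ldots,s_r;N)\in k$ finishes the proof, since $M\leq r$.

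The main obstacle is purely bookkeeping: recognizing the factorization for $\deg P>N$ and setting up the splitting identity correctly. No genuine difficulty remains once the recursion is in place; in particular, negative entries of the tuple $(s_1,\ldots,s_r)$ cause no trouble because FMZVs are defined for arbitrary integer indices by Definition \ref{pFMZV}.
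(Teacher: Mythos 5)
Your proof is correct and takes essentially the same route as the paper: the same factorization into the part with indices $\geq N$ times a constant in $k$, and the same identity decomposing the full FMZV according to how many indices lie above $N$, solved for the top term. The only difference is organizational --- you induct on $M$ after factoring, whereas the paper inducts on the depth $r$ and splits into the cases $M<r$ and $M=r$.
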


\begin{proof}
This is proven by the induction on depth $r$. If $M<r$, we have 

\begin{align*}
&\sum_{\deg P >d_1>\cdots >d_M\geq N>d_{M+1}>\cdots>d_r\geq0}S_{d_1}(s_1)\cdots S_{d_r}(s_r)\\
=&\sum_{N >d_{M+1}>\cdots>d_r\geq0}S_{d_{M+1}}(s_{M+1})\cdots S_{d_r}(s_r)\times \sum_{\deg P >d_1>\cdots>d_M\geq N}S_{d_1}(s_1)\cdots S_{d_M}(s_M),
\end{align*}
hence the induction hypothesis implies the desired result. In the case $M=r$, the equation
\begin{align*}
&\sum_{\deg P >d_1>\cdots >d_r\geq N}S_{d_1}(s_1)\cdots S_{d_r}(s_r)\\
=&\zeta_\mathcal{A}(s_1,\,\dots,\,s_r)_P - \sum_{M^\prime=0}^{r-1}\left(\sum_{\deg P >d_1>\cdots >d_{M^\prime} \geq N>d_{M^\prime+1}>\cdots>d_r\geq0}S_{d_1}(s_1)\cdots S_{d_r}(s_r)\right)
\end{align*}
holds. Therefore we have the result.

\end{proof}

\begin{theorem} \label{Appthm}
Any FMZV with integer index is expressed as $k$-linear combinations of FMZVs with all positive indices.
\end{theorem}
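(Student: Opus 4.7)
The plan is to combine Proposition \ref{Appprop} with Lemma \ref{Applemma} by splitting the defining sum at a suitable degree threshold. Given $\bold{s}=(s_1,\dots,s_r)\in\mathbb{Z}^r$, I would start from
\[
\zeta_{\mathcal{A}_k}(\bold{s})_P=\sum_{\deg P>d_1>\cdots>d_r\geq0}S_{d_1}(s_1)\cdots S_{d_r}(s_r)
\]
and set $I:=\{i\mid s_i\leq 0\}$. If $I=\emptyset$ there is nothing to prove; otherwise put $i_0:=\min I$ and use Proposition \ref{Appprop} to pick $N\in\mathbb{N}$ with $S_d(s_i)=0$ for every $i\in I$ and every $d\geq N$.

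Next I would partition the range by the largest index $M$ with $d_M\geq N$ (so $d_{M+1}<N$, with the convention $d_0=\deg P$). Because $d_1>\cdots>d_r$, any term with $M\geq i_0$ forces $d_{i_0}\geq N$, hence $S_{d_{i_0}}(s_{i_0})=0$, and such terms drop out. For $0\leq M\leq i_0-1$ the summand factors cleanly, giving
\[
\zeta_{\mathcal{A}_k}(\bold{s})_P=\sum_{M=0}^{i_0-1}c_M\cdot T_M(P),
\]
where $c_M:=\sum_{N>d_{M+1}>\cdots>d_r\geq0}S_{d_{M+1}}(s_{M+1})\cdots S_{d_r}(s_r)\in k$ is a finite (hence bounded, $P$-independent) scalar, and $T_M(P):=\sum_{\deg P>d_1>\cdots>d_M\geq N}S_{d_1}(s_1)\cdots S_{d_M}(s_M)$. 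The crucial point is that $T_M(P)$ involves only the indices $s_1,\dots,s_M$, and since $M<i_0$ these are all positive integers.

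Finally I would apply Lemma \ref{Applemma} to each $T_M(P)$, taking both the lemma's ``$r$'' and ``$M$'' equal to $M$ and keeping the same threshold $N$. This expresses $T_M(P)$ as a $k$-linear combination of FMZVs of depth $\leq M$; tracing the inductive proof of the lemma, every FMZV that arises has indices drawn from the positive tuple $(s_1,\dots,s_M)$, since each recursion step either absorbs a bounded sum into the scalar coefficient or outputs some $\zeta_{\mathcal{A}_k}(s_1,\dots,s_{M'})$ with $M'\leq M$. Multiplying by $c_M$ and summing over $M$ presents $\zeta_{\mathcal{A}_k}(\bold{s})$ as a $k$-linear combination of FMZVs with all-positive indices.

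The main obstacle is the bookkeeping in this last step: one must verify that when Lemma \ref{Applemma} is invoked on an all-positive index tuple, no non-positive index creeps back into the FMZVs produced by its recursion. This is guaranteed by the observation that once the cut at $N$ has been placed strictly before position $i_0$, the unbounded tail above $N$ carries only the positive indices $s_1,\dots,s_M$, and every subsequent splitting performed inside the lemma inherits this property.
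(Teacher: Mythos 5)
Your argument is correct and rests on the same two ingredients as the paper's proof --- Proposition \ref{Appprop} to kill the partial sums $S_d(s_i)$ with $s_i\leq 0$ beyond a threshold $N$, and Lemma \ref{Applemma} to convert the surviving unbounded sums into FMZVs --- but it is organized differently, and in one respect more carefully. The paper runs an induction on the depth: it picks some $M$ with $s_{M+1}\leq 0$, asserts that $\zeta_{\mathcal{A}_k}(s_1,\dots,s_r)_P$ equals the single sum over $\deg P>d_1>\cdots>d_M\geq N>d_{M+1}>\cdots>d_r\geq0$, and then invokes Lemma \ref{Applemma} together with the induction hypothesis. Read literally, that displayed identity omits the configurations with $d_M<N$: the vanishing of $S_{d_{M+1}}(s_{M+1})$ only forces $d_{M+1}<N$, not $d_M\geq N$, so what is actually needed is the sum over all cut positions $M'\leq M$ --- which is exactly the decomposition you write down. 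By cutting at $i_0=\min\{i\mid s_i\leq 0\}$ you also make the theorem-level induction unnecessary: every unbounded factor $T_M(P)$ with $M<i_0$ carries only positive exponents, and the recursion inside Lemma \ref{Applemma} only ever outputs FMZVs attached to prefixes $(s_1,\dots,s_{M'})$ of the tuple it is fed, the right-hand tails being absorbed into constants in $k$, so positivity is preserved automatically. Your closing ``bookkeeping'' worry is therefore already settled by that prefix observation, and the proof is complete as written.
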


\begin{proof}
We use the induction on depth. We consider a FMZV $\zeta_\mathcal{A}(s_1,\,\dots,\,s_r)$. If $s_1\leq0$, then Proposition \ref{Appprop} implies that $\zeta_\mathcal{A}(s_1,\,\dots,\,s_r) \in k$. Assume that $s_{M+1}\leq0$ for some $M$ with $1\leq M\leq r-1$. Then we can take $N \in \mathbb{N}$ such that $S_d(s_{M+1})=0$ for any $d\geq M$. Then we have
\begin{equation*}
\zeta_\mathcal{A}(s_1,\,\dots,\,s_r)=\left(\sum_{\deg P >d_1>\cdots >d_M\geq N>d_{M+1}>\cdots>d_r\geq0}S_{d_1}(s_1)\cdots S_{d_r}(s_r)\right)_P.
\end{equation*}
Hence, the desired result follows from the induction hypothesis and Lemma \ref{Applemma}.
\end{proof}

\begin{remarks}
\begin{enumerate}
\item
In the same way, we can show that any AFMZV with integer index can be expressed as $k$-linear combinations of AFMZVs with all positive indices.
\item
If $s_i\leq0$ for all $i$, a FMZV $\zeta_\mathcal{A}(s_1,\,\dots,\,s_r)$ is in $A$.
\end{enumerate}
\end{remarks}



\end{document}